\def\rh#1{{\rm\bf H}_{\mathbb R}^{#1}}
\def\ch#1{{\rm\bf H}_{\mathbb C}^{#1}}
\def\pu#1{{\rm PU}(#1,1)}
\def\su#1{{\rm SU}(#1,1)}
\def\u#1{{\rm U}(#1,1)}
\def\isom#1{{\rm Isom}(#1)}
\def\Heis{{\mathcal H}}
\def\Siegel{{\mathcal S}}
\def\mod#1{\vert #1\vert}
\def\hmod#1{\parallel #1\parallel}
\def\hherm#1{{\langle\!\langle #1 \rangle\!\rangle}}
\def\A{\mathbb A}
\def\J{\mathbb J}
\def\R{\mathbb R}
\def\C{\mathbb C}
\def\D{\mathbb D}
\def\Z{\mathbb Z}
\def\Q{\mathbb Q}
\def\P{\mathbb P}
\def\V{\mathbb V}
\def\E{\mathcal E}
\def\herm#1{\langle #1\rangle}
\newcommand{\dps}{\displaystyle}
\def\qi{q_\infty}
\def\Qi{Q_\infty}
\def\cym#1{\rho_0(#1)}
\def\inte#1{{\rm Int}\left(#1\right)}
\def\exte#1{{\rm Ext}\left(#1\right)}
\newenvironment{proof}{\noindent\normalsize {\sc Proof}:}{{\hfill \rule{1mm}{3mm}}}
\newtheorem{theorem}{Theorem}[section]
\newtheorem{co}{Corollary}[section]
\newtheorem{prop}{Proposition}[section]
\newtheorem{lemma}{Lemma}[section]
\title{A note on trace fields of complex hyperbolic groups}
\author{Heleno Cunha\ \ \ \ \ \ \ \ \ \ Nikolay Gusevskii\thanks{Corresponding author. Supported by CNPq and FAPEMIG.}\\
cunha@mat.ufmg.br\ \ \ \ nikolay@mat.ufmg.br\\
\\
Departamento de Matem\'{a}tica\\
Universidade Federal de Minhas Gerais \\
Belo Horizonte -- MG\\
Brazil\\
30123-970}
\date{}
\begin{document}
\maketitle


\begin{abstract}
\noindent We show that if $\Gamma$ is an irreducible subgroup of
${\rm SU}(2,1)$, then $\Gamma$ contains a loxodromic element $A$. If
$A$ has eigenvalues $\lambda_1 = \lambda e^{i\varphi},$ $\lambda_2 =
e^{-2i\varphi}$, $\lambda_3 = \lambda^{-1}e^{i\varphi}$,  we prove
that $\Gamma$ is conjugate in ${\rm SU}(2,1)$ to a subgroup of ${\rm
SU}(2,1,\Q(\Gamma,\lambda)),$ where $\Q(\Gamma, \lambda)$ is the
field generated by the trace field $\Q(\Gamma)$ of $\Gamma$ and
$\lambda$. It follows from  this that if $\Gamma$ is an irreducible
subgroup of ${\rm SU}(2,1)$ such that the trace field $\Q(\Gamma)$
is real, then $\Gamma$ is conjugate in ${\rm SU}(2,1)$ to a subgroup
of ${\rm SO}(2,1)$. As a geometric application of the above, we get
that if $G$ is an irreducible discrete subgroup of ${\rm PU}(2,1)$,
then $G$ is an $\R$-Fuchsian subgroup of ${\rm PU}(2,1)$ if and only
if the invariant trace field $k(G)$ of $G$ is real.
\end{abstract}

\quad {\sl MSC:} 32H20; 20H10; 22E40; 57S30; 32G07; 32C16

\quad {\sl Keywords:} Complex hyperbolic groups, trace fields.

\section*{Introduction}

Arithmetic methods are a  powerful tool in the study of Kleinian
groups, discrete subgroups of $\rm{PSL}(2,\C),$ especially of
finite-covolume discrete groups, as was demonstrated in \cite{MaR},
see also an extensive bibliography there. A central theme in this
theory is to understand the structure of the invariant trace field
and the invariant (quaternion) algebra associated to a Kleinian
group. In the case of complex hyperbolic geometry, that is, in the
case of subgroups of ${\rm PU}(n,1)$ (${\rm SU}(n,1)$) little known
about these objects, see for instance \cite{Mc}, where the study of
the invariant trace fields and the invariant algebras associated to
subgroups of ${\rm SU}(n,1)$ was initiated. In particular, in this
work the invariant trace field and the invariant algebra were
introduced for subgroups of ${\rm SU}(n,1)$. An important problem
here is to understand whether a subgroup of ${\rm SU}(n,1)$ can be
realized over the field generated by the eigenvalues of its
elements. In this paper, we prove that any irreducible subgroup
$\Gamma$ of ${\rm SU}(2,1)$ contains a loxodromic element $A$ and it
can be realized over the field generated by the trace field of
$\Gamma$ and the eigenvalues of $A$.

\vspace{2mm}

The main result of our paper is the following theorem:

\vspace{2mm}

\noindent \textbf{Theorem A} \textit{Let $\Gamma$ be an irreducible
subgroup of ${\rm SU}(2,1)$ and  $A \in \Gamma$ be loxodromic with
eigenvalues $\lambda_1 = \lambda e^{i\varphi},$  $\lambda_2 =
e^{-2i\varphi}$, $\lambda_3 = \lambda^{-1}e^{i\varphi}$. Then
$\Gamma$ is conjugate in ${\rm SU}(2,1)$ to a subgroup of ${\rm
SU}(2,1,\Q(\Gamma, \lambda))$, where $\Q(\Gamma, \lambda)$ is the
field generated by the trace field $\Q(\Gamma)$ of $\Gamma$ and
$\lambda$.}

\vspace{2mm}

As a corollary of this theorem, we get the  following

\vspace{2mm}

\noindent \textbf{Theorem B} \textit{Let $\Gamma$ be an irreducible
subgroup of ${\rm SU}(2,1)$ such that $\Q(\Gamma)$ is a subset of
$\R$, then $\Gamma$ is conjugate in ${\rm SU}(2,1)$ to a subgroup of
${\rm SO}(2,1)$.}

\vspace{2mm}

We would like to stress that in Theorem A and Theorem B we do not
assume that the group $\Gamma$ is discrete.

\vspace{2mm}

Also, in this paper, we define an invariant trace field for
subgroups of ${\rm PU}(2,1)$. Let $G$ be a subgroup of ${\rm
PU}(2,1)$ and $\Gamma = \pi^{-1}(G)$, where $\pi : {\rm SU}(2,1)
\rightarrow {\rm PU}(2,1)$ is a natural projection. Then the  {\sl
invariant trace field} of $G$, denoted by $k(G)$, is defined to be
the field $\Q(\Gamma^{3})$, where $\Gamma^{3}= \langle \gamma^3:
\gamma \in \Gamma \rangle$. It follows from \cite{Mc} that the
invariant trace field is an invariant  of the commensurability
class.

\vspace{2mm}

We say that a subgroup $G$ of ${\rm PU}(2,1)$  is an $\R$-subgroup
if it leaves invariant a totally real geodesic 2-plane in $\ch{2}$.
A subgroup $G$ of ${\rm PU}(2,1)$ is called $\R$-Fuchsian if it is a
discrete $\R$-subgroup. A subgroup $G$ is a $\C$-subgroup if it
leaves invariant a complex geodesic in $\ch{2}$. A subgroup $G$ of
${\rm PU}(2,1)$ is called $\C$-Fuchsian if it is a discrete
$\C$-subgroup.

\vspace{2mm}

By applying Theorem B, we get the following characterization of
discrete non-elementary $\R$-subgroups of ${\rm PU}(2,1)$.

\vspace{2mm}

\noindent \textbf{Theorem C} \textit{Let $G$ be an irreducible
discrete subgroup of ${\rm PU}(2,1)$. Then $G$ is an $\R$-Fuchsian
if and only if the invariant trace field $k(G)$ of $G$ is real.}

\vspace{2mm}

This implies, in particular, that if $G$ is an irreducible discrete
subgroup of ${\rm PU}(2,1)$ whose invariant trace field is real,
then the invariant algebra associated to $G$ is of dimension nine
over $k(G)$. On the other hand, if $G$ is a non-elementary
$\C$-subgroup of ${\rm PU}(2,1)$) ($G$ is reducible in this case)
with real invariant trace field, then the invariant algebra
associated to $G$ is of dimension  four.

\vspace{2mm}

As a corollary of Theorem C, we have the following.

\vspace{2mm}

\noindent \textbf{Theorem D} \textit{Let $G$ be a discrete
non-elementary subgroup of ${\rm PU}(2,1)$ such that the invariant
trace field $k(G)$ of $G$ is real. Then $G$ is either $\R$-Fuchsian
or  $\C$-Fuchsian.}

\vspace{2mm}

We remark that Theorem D can be considered as a complex hyperbolic
analog of a classical result due to B.Maskit, see Theorem G.18 in
\cite{Mas} and Corollary 3.2.5 in \cite{MaR}. Finally, we would like
to mention that some related questions were considered in \cite{Ber,
Fu, Ge, P, V1,V2, W, X}.

\vspace{2mm}

The article  is organized as follows. In Section 1, we review some
basic facts in complex hyperbolic geometry. In Section 2, we prove
our main results.

\section{Complex hyperbolic plane and its isometry group}

Let $V$ be a $3$-dimensional $\mathbb C$-vector space equipped with
a Hermitian form $\herm{-,-}$ of signature $(2,1)$. We denote by
$\P(V)$ the complex projectivization of $V$ and by $\P:V \setminus
\{0\} \rightarrow \P(V)$ a natural projection.

\medskip

Let $V_{-}, V_0, V_{+}$ be the subsets of $V \setminus \{0\}$
consisting of vectors where $\herm{v,v}$ is negative, zero, or
positive respectively. Vectors in $V_0$ are called {\sl null} or
{\sl isotropic}, vectors in $V_{-}$ are called {\sl negative}, and
vectors in  $V_{+}$ are called {\sl positive}. Their projections to
$\P(V)$ are called {\sl isotropic}, {\sl negative}, and {\sl
positive} points respectively.

\medskip

The projective model of the {\sl complex hyperbolic plane} $\ch{2}$
is the set of negative points in  $\P(V)$, that is,
$\ch{2}=\P(V_{-}).$ The boundary $\partial {\ch{2}}=\P(V_{0})$ of
$\ch{2}$  is the $3$-sphere formed by all isotropic  points.

\medskip

The Hermitian form $\herm{-,-}$ defines a metric, the Bergman
metric, on $\ch{2}$,  see \cite{Gol}. Let ${\rm U}(V)$ be the
unitary group corresponding to this Hermitian form. Then the
holomorphic isometry group of $\ch{2}$ is the projective unitary
group ${\rm PU}(V)$, and the full isometry group of $\ch{2}$ is
generated by ${\rm PU}(V)$ and complex conjugation. We denote by
${\rm SU}(V)$ the subgroup of linear transformations in ${\rm U}(V)$
with determinant 1.

\vspace{2mm}

For our purposes it is convenient to work with a basis $e=\{e_1,
e_2, e_3\}$ in $V$ which has the following properties:
$$\herm{e_1,e_1}=0, \ \herm{e_2,e_2}=1, \ \herm{e_3,e_3}=0, \
 \herm{e_1,e_2}=0, \ \herm{e_2,e_3}=0, \ \herm{e_1,e_3}=1.$$

So, in this basis $e_1$ and $e_3$ are isotropic and $e_2$ is
positive. In what follows, we denote by $\C^{2,1}$ the vector space
$V$ equipped with this basis. If the vectors $z=(z_1,z_2,z_3)^T$ and
$w=(w_1,w_2,w_3)^T$ in $\C^{2,1}$ are given by their coordinates in
$e=\{e_1, e_2, e_3\}$, then the Hermitian product $\herm{v,w}$ is
given by
$$\herm{v,w}= z_1 \bar{w}_3 + z_2 \bar{w}_2 + z_3 \bar{w}_1.$$

The use of this basis simplifies essentially matrix computations and
it was successfully applied in a series of works, see, for instance
\cite{CuG1, CuG2, CDGT, DG, E, FGLP, FGP, FP, GuP1, GuP2, W}.
Throughout this paper we will use this basis.

\medskip

Let ${\rm U}(2,1)$ and ${\rm SU}(2,1)$ denote the representations of
${\rm U}(V)$ and ${\rm SU}(V)$ in the basis $e=\{e_1, e_2, e_3\}.$

\medskip

If $A$ is an element of ${\rm SU}(2,1),$ then the matrix  $A$ is
defined by the following simple conditions:

$$\herm{v_1,v_1}=0, \ \herm{v_2,v_2}=1, \ \herm{v_3,v_3}=0, \
 \herm{v_1,v_2}=0, \ \herm{v_2,v_3}=0, \ \herm{v_1,v_3}=1,$$
where $v_1$, $v_2$, $v_3$  denote the vectors defined by the rows of
$A$.

\medskip

Also, we have the following useful formula for the inverse of $A \in
{\rm SU}(2,1):$

$$
A= \left[
\begin{array}{ccc}
a_{11} & a_{12} & a_{13} \\
a_{21} & a_{22} & a_{23} \\
a_{31} & a_{32} & a_{33} \\
\end{array}
\right],\ \
 A^{-1} = \left[
\begin{array}{ccc}
\overline{a}_{33} & \overline{a}_{23} & \overline{a}_{13} \\
\overline{a}_{32} & \overline{a}_{22} & \overline{a}_{12} \\
\overline{a}_{31} & \overline{a}_{21} & \overline{a}_{11} \\
\end{array}
\right].
$$

It is seen that $A^{-1}$ is the Hermitian anti-transpose of $A$.

\medskip

The non-trivial elements of ${\rm PU}(2,1)$ fall into three general
conjugacy types, depending on the number and location of their fixed
points.
\begin{itemize}
\item {\sl Elliptic elements} have a fixed point in $\ch{2}$,
\item {\sl Parabolic elements} have a single fixed point on the boundary of $\ch{2}$,
\item {\sl Loxodromic elements} have exactly two fixed points on the boundary of $\ch{2}$.
\end{itemize}

This exhausts all possibilities, see \cite{Gol} for details.

\medskip

Let $\pi : {\rm SU}(2,1) \rightarrow  {\rm PU}(2,1)$ be a natural
projection.  We call an element $A \in {\rm SU}(2,1)$ loxodromic
(parabolic, elliptic) if its projectivization $\pi(A)$ is loxodromic
(parabolic, elliptic). For instance, any loxodromic element $A \in
{\rm SU}(2,1)$ is conjugate in  ${\rm SU}(2,1)$ to an element of the
following form
$$
A = \left[
\begin{array}{ccc}
\lambda_1 & 0 & 0 \\
0 & \lambda_2 & 0 \\
0& 0 & \lambda_3 \end{array}
 \right],
$$
where $\lambda_1 = \lambda e^{i\varphi}$, $ \lambda_2 =
e^{-2i\varphi},$ $\lambda_3 = \lambda^{-1} e^{i\varphi}$,
$\lambda>0,$ $\lambda \neq 1$, $\varphi \in (-\pi,\pi].$

\medskip

A parabolic element $g \in \pu{2}$ is {\sl unipotent} if it can be
represented by a unipotent element of ${\rm SU}(2,1),$ that is, a
matrix having $1$ as its only eigenvalue. Otherwise, $g$ is {\sl
ellipto-parabolic}. In that case $g$ can be represented by an
element of ${\rm SU}(2,1)$ having a repeated non-real eigenvalue of
norm $1$. Also, $g$ has a unique invariant complex geodesic, see
below.

\medskip

There are two types of totally geodesic submanifolds of $\ch{2}$ of
real dimension two:
\begin{itemize}
\item {\sl Complex geodesics} (copies of $\ch{1}$) have constant
sectional curvature $-1$.
\item {\sl Totally real geodesic 2-planes} (copies of $\rh{2}$)
have constant sectional curvature $-1/4$.
\end{itemize}

\medskip

Any complex geodesic is the intersection of a complex projective
line in $\P(V)$ with $\ch{2},$  and it is uniquely defined by its
polar point, which is positive \cite{Gol}. We recall that a polar
point to a complex projective line $c$ in $\P(V)$ is the
projectivization of the Hermitian orthogonal complement in $V$ of
$\P^{-1}(c)$.

\medskip

Any totally real geodesic 2-plane is the intersection of a totally
real projective plane in $\P(V)$ with $\ch{2}$.

\medskip

We recall that a subspace $S$ of $V_{\R}$, where  $V_{\R}$ is the
real vector space underlying $V$, is {\sl totally real} if and only
if $S$ and its image $\J (S)$ are orthogonal with respect to the
Hermitian product $\herm{-,-}$, see \cite{Gol}. It is easy to show
that $S$ is totally real if and only if the Hermitian product
$\herm{v,u}$ is real for all $v, u \in S.$ An example of a totally
real subspace is the $\R$-linear span of $e=\{e_1, e_2, e_3\}$, the
basis considered above. We call this subspace {\sl a canonical}
totally real subspace.  The projectivization of this space is called
a {\sl canonical totally real} projective 2-plane. Any totally real
2-plane in $\P(V)$ is the image of the canonical totally real
projective 2-plane under an element from the group ${\rm PU}(2,1).$
The stabilizer  of the  canonical totally real subspace in ${\rm
SU}(2,1)$ can be canonically identified with the group ${\rm
SO}(2,1)$.

\medskip

A chain is the boundary of a complex geodesic. An $\R$-circle is the
boundary of a totally real geodesic 2-plane, see \cite{Gol}.

\medskip

Let $G$ be a subgroup of ${\rm PU}(2,1)$. We say that $G$ is a
$\C$-subgroup if it leaves invariant a complex geodesic. $G$ is
called an $\R$-subgroup if it leaves invariant a totally real
geodesic 2-plane. A subgroup $\Gamma$ of ${\rm SU}(2,1)$ is called
$\C$-subgroup if its projectivization is a $\C$-subgroup of ${\rm
PU}(2,1)$. Similarly, a subgroup $\Gamma$ of ${\rm SU}(2,1)$ is
called an $\R$-subgroup if its projectivization is an $\R$-subgroup
of ${\rm PU}(2,1)$. We say that a subgroup $G$ of  ${\rm PU}(2,1)$
is ¨$\C$-Fuchsian if it is a discrete $\C$-subgroup, and $G$ is
called $\R$-Fuchsian if it is a discrete $\R$-subgroup. Typical
examples of $\C$-subgroups of ${\rm SU}(2,1)$ are subgroups of ${\rm
SU}(1,1)$ canonically embedded into ${\rm SU}(2,1)$, and typical
examples of $\R$-subgroups of ${\rm SU}(2,1)$ are subgroups of ${\rm
SO}(2,1)$ canonically embedded into ${\rm SU}(2,1)$.

\medskip

Recall that a subgroup $\Gamma$ of ${\rm SU}(2,1)$ is called {\sl
reducible} if it has an invariant proper $\C$-subspace of $V,$ and
called {\sl irreducible} otherwise. It is clear that a subgroup
$\Gamma$ of ${\rm SU}(2,1)$ is irreducible if and only if $\Gamma$
has no invariant 1-dimensional $\C$-subspaces of $V$.

\medskip

A subgroup $G$ of ${\rm PU}(2,1)$ is called {\sl reducible} if all
elements of $G$ have a common fixed point in their action on the
projective space $\P(V)$. Otherwise, $G$ is called {\sl
irreducible}.

\medskip

A subgroup $G$ of ${\rm PU}(2,1)$ is called {\sl elementary} if it
has a finite orbit in its action on $\ch{2} \cup \partial {\ch{2}}.$
Otherwise, $G$ is {\sl non-elementary}. A subgroup $\Gamma$ of ${\rm
SU}(2,1)$ is called {\sl elementary} if its projectivization is
elementary. Otherwise, $\Gamma$ is {\sl non-elementary}.

\medskip

Clearly, any $\C$-subgroup $G$ of ${\rm PU}(2,1)$ is reducible
because the polar point to the invariant complex geodesic of $G$ is
a common fixed point for all elements of $G$ in their  action on
$\P(V)$. Also, it is clear that a non-elementary $\R$-subgroup of
${\rm PU}(2,1)$ is irreducible.

\section{Trace fields of complex hyperbolic groups}

Let $\Gamma$ be a subgroup of ${\rm SU}(2,1)$. Then the {\sl trace
field} of $\Gamma$, denoted by $\mathbb Q ( {\rm tr} \Gamma)$, is
the field generated by the traces of all the elements of $\Gamma$
over the base field $\mathbb Q$ of rational numbers. For simplicity,
we will denote the trace field of $\Gamma$ by $\Q(\Gamma)$. Of
course, $\Q(\Gamma)$ is a conjugacy invariant. We remark that the
trace field $\Q(\Gamma)$ is also invariant under complex
conjugation, since for $B \in {\rm SU}(2,1)$ we have ${\rm
tr}(B^{-1})= \overline{{\rm tr}(B)}$.

\vspace{3mm}

If $\Gamma$ contains a loxodromic element $A$ with eigenvalues
$\lambda_1 = \lambda e^{i\varphi}$, $\lambda_2 = e^{-i2\varphi}$,
$\lambda_3 = \lambda^{-1}e^{i\varphi},$ then $\Q(\Gamma, \lambda)$
denotes the field generated by $\Q(\Gamma)$ and $\lambda$. Also, for
any field $k$ we denote by ${\rm SU}(2,1,k)$ the intersection of
${\rm SU}(2,1)$ with ${\rm M}(3,k)$.

\begin{lemma} Let $\Gamma=\langle A \rangle$ be the group generated by
a loxodromic element $A \in {\rm SU}(2,1)$ with eigenvalues
$\lambda_1 = \lambda e^{i\varphi}$, $ \lambda_2 = e^{-2i\varphi},$
$\lambda_3 = \lambda^{-1} e^{i\varphi}$. Then $ e^{i\varphi} \in
\Q(\Gamma,\lambda).$
\end{lemma}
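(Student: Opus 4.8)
The plan is to recover $e^{i\varphi}$ purely from ${\rm tr}(A)$ and ${\rm tr}(A^{-1})$ by elementary elimination. Write $w=e^{i\varphi}$ and $\mu=\lambda+\lambda^{-1}$. Since $\lambda>0$ and $\lambda\neq1$, the number $\mu$ is real with $\mu>2$, and clearly $\mu\in\Q(\Gamma,\lambda)$. As $A$ and $A^{-1}$ both lie in $\Gamma=\langle A\rangle$, with eigenvalues $\{\lambda w,\,w^{-2},\,\lambda^{-1}w\}$ and $\{\lambda^{-1}w^{-1},\,w^{2},\,\lambda w^{-1}\}$ respectively, their traces
$$t:={\rm tr}(A)=\mu w+w^{-2},\qquad s:={\rm tr}(A^{-1})=\mu w^{-1}+w^{2}$$
lie in $\Q(\Gamma)\subseteq\Q(\Gamma,\lambda)$. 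So it suffices to express $w$ as a rational function of $\mu,s,t$. I first note that $t\neq0$ and $s\neq0$: if, say, $t=0$ then $\mu w=-w^{-2}$, and taking absolute values (using $|w|=1$) gives $\mu=1$, contradicting $\mu>2$; the same argument applies to $s$.

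Clearing denominators in the two trace identities gives the relations $\mu w^{3}-tw^{2}+1=0$ and $w^{3}-sw+\mu=0$ over $\Q(\Gamma,\lambda)$. Subtracting $\mu$ times the second from the first kills $w^{3}$ and yields $tw^{2}-\mu sw+(\mu^{2}-1)=0$. Multiplying this quadratic by $w$ and substituting $w^{3}=sw-\mu$ gives a second quadratic $\mu sw^{2}-(ts+\mu^{2}-1)w+t\mu=0$. Eliminating $w^{2}$ between the two quadratics (for instance, $(\mu s)$ times the first minus $t$ times the second) produces the linear relation
$$D\,w=\mu\bigl(t^{2}-s(\mu^{2}-1)\bigr),\qquad D:=t^{2}s+t(\mu^{2}-1)-\mu^{2}s^{2},$$
with $D\in\Q(\Gamma,\lambda)$. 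Provided $D\neq0$, this exhibits $w=\mu\bigl(t^{2}-s(\mu^{2}-1)\bigr)/D$ as an element of $\Q(\Gamma,\lambda)$, which is exactly what we want.

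The one step requiring an idea is to rule out $D=0$, and this is precisely where the loxodromy hypothesis enters. If $D=0$, then $\mu\bigl(t^{2}-s(\mu^{2}-1)\bigr)=0$, so $t^{2}=s(\mu^{2}-1)$ because $\mu\neq0$; substituting this back into $D=0$ forces $s^{2}=t(\mu^{2}-1)$. Dividing these two identities (legitimate since $s,t\neq0$) gives $(t/s)^{3}=1$, hence $|t|=|s|$, and therefore $|t|^{2}=|t^{2}|=|s|(\mu^{2}-1)=|t|(\mu^{2}-1)$, that is, $|t|=\mu^{2}-1$. But $|t|=|\lambda w+w^{-2}+\lambda^{-1}w|\le\lambda+1+\lambda^{-1}=\mu+1$, and $\mu>2$ gives $\mu^{2}-1>\mu+1$; this contradiction shows $D\neq0$ and finishes the proof. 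I expect this size estimate — resting on the strict inequality $\mu>2$, which holds exactly because $A$ is loxodromic — to be the only real obstacle; the rest is routine elimination.
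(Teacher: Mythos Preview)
Your proof is correct. The elimination is clean, and the size estimate $|t|\le\mu+1<\mu^{2}-1$ (valid precisely because $\mu>2$) disposes of the degenerate case $D=0$ properly.

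Your route differs genuinely from the paper's. The paper splits into real and imaginary parts: from ${\rm tr}(A)$ and $\overline{{\rm tr}(A)}={\rm tr}(A^{-1})$ it first extracts ${\rm Re}({\rm tr}(A))=2\cos^{2}\varphi+\mu\cos\varphi-1$ and $|{\rm tr}(A)|^{2}$, deduces $\cos3\varphi\in\Q(\Gamma,\lambda)$, and then invokes the algebraic identity
\[
\cos\varphi=\frac{\cos3\varphi+({\rm Re}({\rm tr}(A))+1)\,\mu}{2\,{\rm Re}({\rm tr}(A))+\mu^{2}-1},
\]
whose denominator $4\cos^{2}\varphi+2\mu\cos\varphi+\mu^{2}-3$ is checked nonzero for $\mu>2$; finally $i\sin\varphi$ is obtained from ${\rm tr}(A)-\overline{{\rm tr}(A)}$. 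So the paper recovers $\cos\varphi$ and $\sin\varphi$ separately, while you go straight for $w=e^{i\varphi}$ via a resultant-type elimination between the two cubics $\mu w^{3}-tw^{2}+1=0$ and $w^{3}-sw+\mu=0$. Your argument is more compact and avoids the detour through $\cos3\varphi$; the paper's version has the minor advantage that its denominator is a visibly real quadratic in $\cos\varphi$ whose nonvanishing is checked directly, whereas you need the auxiliary symmetry $t^{2}=s(\mu^{2}-1)$, $s^{2}=t(\mu^{2}-1)$ and an absolute-value bound. Both hinge on exactly the same inequality $\lambda+\lambda^{-1}>2$.
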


\begin{proof} Since $\Q(\Gamma)$ is invariant under complex conjugation, it
follows that ${\rm Re} ({\rm tr}(A))$ and $ \mod{{\rm tr}(A)}^2$ are
in $\Q(\Gamma).$ We have that

$${\rm tr}(A)= \lambda e^{i\varphi} + \lambda^{-1}e^{i\varphi}
+e^{-i2\varphi}$$ and

$${\rm tr}(A^{-1})=\overline{{\rm tr}(A)}= \lambda e^{-i\varphi} +
\lambda^{-1}e^{-i\varphi} +e^{i2\varphi}.$$

\vspace{3mm}

A direct computation shows that

$${\rm Re}( {\rm tr}(A)) = \cos2\varphi + (\lambda +\lambda^{-1})\cos\varphi
= 2\cos^2 \varphi +(\lambda +\lambda^{-1})\cos\varphi - 1$$

and

$$\mod { {\rm tr}(A)}^2 = (\lambda^2 + \lambda^{-2}) + 2(\lambda +
\lambda^{-1})\cos3\varphi + 3.$$

Since $\lambda +\lambda^{-1}\neq 0$, the last formula implies that
$\cos3\varphi \in \Q(\Gamma,\lambda)$.

\vspace{3mm}

Looking at these formulae, one could expect that $\cos\varphi$ is in
a proper extension of $\Q(\Gamma,\lambda)$, but the following trick
shows that, in fact, $\cos\varphi$ is in  $\Q(\Gamma,\lambda)$.

\vspace{3mm}

It is easy to see that the following formula
$$(4\cos^{2} \varphi + 2t\cos\varphi + t^2 - 3)\cos\varphi =
4\cos^{3}\varphi -3\cos\varphi + (2\cos^{2}\varphi +
t\cos\varphi)t$$ is true for all $t \in  \R$ and for all $\varphi
\in (-\pi,\pi].$

\vspace{2mm}

Also, it is elementary to check   that
$$4\cos^{2} \varphi +
2t\cos\varphi + t^2 - 3 \neq 0$$
for all $t>2$ and for all $\varphi
\in (-\pi,\pi].$ This implies that for all  $t>2$ and for all
$\varphi \in (-\pi,\pi]$ we have that

$$\cos\varphi =\frac{4\cos^{3}\varphi -3\cos\varphi +
(2\cos^{2}\varphi + t\cos\varphi)t}{4\cos^{2} \varphi +
2t\cos\varphi + t^2 - 3}.$$
In particular, taking $t= \lambda
+\lambda^{-1}$ and using the identity  $\cos3\varphi =4\cos^3\varphi
- 3\cos\varphi$, we get the formula

$$\cos\varphi =\frac{\cos3\varphi +
(2\cos^{2}\varphi + (\lambda +\lambda^{-1})\cos\varphi)(\lambda
+\lambda^{-1})}{4\cos^{2} \varphi + 2(\lambda
+\lambda^{-1})\cos\varphi + (\lambda +\lambda^{-1})^2 - 3}.$$
One
verifies that this can be re-written as

$$\cos\varphi =\frac{\cos3\varphi + ({\rm Re}( {\rm tr}(A))+1)(\lambda +\lambda^{-1})}
{2{\rm Re}( {\rm tr}(A)) +(\lambda +\lambda^{-1})^2 -1}.$$
Therefore, the above implies that $\cos\varphi \in
\Q(\Gamma,\lambda)$ for all $\varphi \in (-\pi,\pi]$.

\vspace{2mm}

Next, a direct computation shows that

$$i\sin\varphi =\frac{{\rm tr}(A) -\overline{{\rm tr}(A)}}
{2(\lambda + \lambda^{-1} + 2\cos\varphi)}.$$

All the above implies that $ e^{i\varphi} \in \Q(\Gamma,\lambda)$
for all $\varphi \in (-\pi,\pi]$.

\end{proof}

\begin{co} Let $\Gamma=\langle A \rangle$ be the group generated by a loxodromic element $A \in
{\rm SU}(2,1)$ with eigenvalues $\lambda_1 = \lambda e^{i\varphi}$,
 $\lambda_2 = e^{-i2\varphi}$, $\lambda_3 = \lambda^{-1} e^{i\varphi}.$ Then all these
eigenvalues belong to $\Q(\Gamma,\lambda).$
\end{co}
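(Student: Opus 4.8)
The plan is to derive this immediately from the preceding Lemma together with the defining properties of the field $\Q(\Gamma,\lambda)$. Recall that $\Q(\Gamma,\lambda)$ is by construction a field containing $\lambda$; since a field is closed under multiplication, addition, and taking inverses of nonzero elements, it also contains $\lambda^{-1}$ and $\lambda+\lambda^{-1}$, which are the quantities manipulated in the Lemma.

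First I would invoke the Lemma to conclude $e^{i\varphi}\in\Q(\Gamma,\lambda)$. Then, since $\lambda\in\Q(\Gamma,\lambda)$ and the field is closed under multiplication, $\lambda_1=\lambda e^{i\varphi}\in\Q(\Gamma,\lambda)$. Next, because $e^{i\varphi}\neq 0$, its inverse $e^{-i\varphi}$ lies in $\Q(\Gamma,\lambda)$, hence so does $\lambda_2=e^{-2i\varphi}=(e^{-i\varphi})^{2}$. Finally, $\lambda\neq 0$ gives $\lambda^{-1}\in\Q(\Gamma,\lambda)$, so $\lambda_3=\lambda^{-1}e^{i\varphi}\in\Q(\Gamma,\lambda)$ as well. (As a sanity check one notes $\lambda_1\lambda_2\lambda_3=1$, consistent with membership in a field.)

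There is essentially no obstacle here: the corollary is a formal consequence of the Lemma and the closure axioms for a field, so the only thing to be careful about is that all the exponentials $e^{\pm i\varphi}$ involved are nonzero, which is automatic. The work was already done in establishing the Lemma.
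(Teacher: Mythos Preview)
Your proof is correct and matches the paper's approach: the paper states this as an immediate corollary of the preceding Lemma with no further argument, since once $e^{i\varphi}\in\Q(\Gamma,\lambda)$ is known, the eigenvalues $\lambda e^{i\varphi}$, $e^{-2i\varphi}$, $\lambda^{-1}e^{i\varphi}$ lie in the field by closure under products and inverses, exactly as you wrote.
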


\begin{lemma} Let $\Gamma$ be a subgroup of  ${\rm SU}(2,1)$ containing
a loxodromic element $A={\rm diag}(\lambda_1, \lambda_2,
\lambda_3)$, where $\lambda_1=\lambda e^{i\varphi}$,
$\lambda_2=e^{-i2\varphi}$, $ \lambda_3=\lambda^{-1} e^{i\varphi}.$
Then for any element $B=(b_{ij}) \in \Gamma$, the diagonal elements
 of $B$ are in $\Q(\Gamma,\lambda).$
\end{lemma}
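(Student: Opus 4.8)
The plan is to use the diagonal form of $A$ together with the structure equations satisfied by any $B\in{\rm SU}(2,1)$ in our preferred basis. First I would observe that since $A={\rm diag}(\lambda_1,\lambda_2,\lambda_3)$ lies in $\Gamma$ with all three eigenvalues distinct (loxodromicity), the group $\Gamma$ contains the elements $A$, $A^2$, $A^{-1}$, $BA$, $AB$, $BA^2$, $A^2B$, $ABA$, etc., and the trace of each of these lies in $\Q(\Gamma)$. Writing out these traces explicitly in terms of the entries $b_{ij}$ of $B$ and the $\lambda_i$ gives a system of linear equations in the quantities $b_{11}, b_{22}, b_{33}$ (and, via the mixed products $ABA$-type words, also products like $b_{13}b_{31}$ etc., but those can be kept on the side): for instance ${\rm tr}(BA)=\lambda_1 b_{11}+\lambda_2 b_{22}+\lambda_3 b_{33}$ and ${\rm tr}(BA^2)=\lambda_1^2 b_{11}+\lambda_2^2 b_{22}+\lambda_3^2 b_{33}$, while ${\rm tr}(B)=b_{11}+b_{22}+b_{33}$. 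By Corollary 2.2 the eigenvalues $\lambda_1,\lambda_2,\lambda_3$ themselves lie in $\Q(\Gamma,\lambda)$, so all these traces, viewed as known quantities, lie in $\Q(\Gamma,\lambda)$.

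The key step is then to note that the coefficient matrix of this linear system in the unknowns $b_{11},b_{22},b_{33}$ is the Vandermonde matrix
$$
\begin{bmatrix}
1 & 1 & 1 \\
\lambda_1 & \lambda_2 & \lambda_3 \\
\lambda_1^2 & \lambda_2^2 & \lambda_3^2
\end{bmatrix},
$$
whose determinant is $\prod_{i<j}(\lambda_j-\lambda_i)$. Since $A$ is loxodromic, the three eigenvalues are pairwise distinct, so this determinant is nonzero, and moreover it lies in $\Q(\Gamma,\lambda)$ and is therefore invertible over that field. Cramer's rule then expresses each of $b_{11},b_{22},b_{33}$ as a ratio of elements of $\Q(\Gamma,\lambda)$, hence each diagonal entry of $B$ lies in $\Q(\Gamma,\lambda)$, as claimed.

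I expect the only genuine subtlety to be bookkeeping: one must make sure that the three group elements whose traces are used — here $B$, $BA$, $BA^2$ — really do have traces expressible purely through $b_{11},b_{22},b_{33}$ and the (already controlled) eigenvalues, with no leftover off-diagonal entries of $B$ entering. This is immediate because multiplying $B$ on the right by the diagonal matrix $A^k$ scales the $j$-th column by $\lambda_j^k$, so the diagonal of $BA^k$ is $(\lambda_1^k b_{11},\lambda_2^k b_{22},\lambda_3^k b_{33})$ and its trace is exactly $\sum_j \lambda_j^k b_{jj}$. Thus no off-diagonal terms appear, the Vandermonde system is exactly as above, and the argument goes through cleanly. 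The distinctness of the $\lambda_i$, guaranteed by loxodromicity, is the one hypothesis that is truly essential here.
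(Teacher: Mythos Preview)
Your proof is correct and follows essentially the same approach as the paper: set up a linear system for $b_{11},b_{22},b_{33}$ from the traces of $B$ multiplied by powers of $A$, invoke Corollary~2.1 to place the coefficients in $\Q(\Gamma,\lambda)$, check nonsingularity, and apply Cramer's rule. The only difference is that the paper uses $B$, $AB$, $A^{-1}B$ (and computes the resulting determinant by hand, reducing its vanishing to the impossible equation $\cos 3\varphi=(\lambda+\lambda^{-1})/2$), whereas your choice of $B$, $BA$, $BA^2$ yields a Vandermonde matrix whose nonsingularity is immediate from the distinct moduli $\lambda,1,\lambda^{-1}$ of the eigenvalues --- a slightly cleaner variant of the same argument.
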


\begin{proof} We write

$$
A = \left[
\begin{array}{ccc}
\lambda_1 & 0 & 0 \\
0 & \lambda_2 & 0 \\
0& 0 & \lambda_3 \end{array}
 \right],\ \ \ \
A^{-1} = \left[
\begin{array}{ccc}
\bar{\lambda}_3 & 0 & 0 \\
0 & \bar{\lambda}_2 & 0 \\
0& 0 & \bar{\lambda}_1 \end{array}
 \right],\ \ \ \
B= \left[
\begin{array}{ccc}
b_{11} & b_{12} & b_{13} \\
b_{21} & b_{22} & b_{23} \\
b_{31} & b_{32} & b_{33} \\
\end{array}
\right].
$$
Then computations show that

$$
\begin{array}{llrlrrrll}
{\rm tr}(B) & = & b_{11} & + & b_{22} & + & b_{33} & = & t_1,\\
{\rm tr}(AB) & = & \lambda_1 b_{11} & + & \lambda_2 b_{22} & + &
\lambda_3
b_{33} &=& t_2,\\
{\rm tr}(A^{-1}B) & = & \bar{\lambda}_3 b_{11} & + & \bar{\lambda}_2
b_{22} & + &\bar{\lambda}_1b_{33} &=& t_3.
\end{array}
$$
Note that every $t_i $ lies in $\Q(\Gamma)$. We consider these
equalities as a system of linear equations in three unknowns
$b_{11}, b_{22}, b_{33}$. Let us show that the matrix $L$ of this
system is nonsingular. We write
$$L= \left[
\begin{array}{ccc}
1 & 1 & 1 \\
\lambda_1 & \lambda_2  & \lambda_3 \\
\bar{\lambda}_3 &  \bar{\lambda}_2 &  \bar{\lambda}_1 \\
\end{array}
\right].
$$
Then a computation gives that
$$\det L = (\lambda_2\bar{\lambda}_1- \lambda_3\bar{\lambda}_2) - (\lambda_1\bar{\lambda}_1-
\lambda_3\bar{\lambda}_3)+ (\lambda_1\bar{\lambda}_2-
\lambda_2\bar{\lambda}_3)= (\lambda^{-2}-\lambda^{2}) + 2(\lambda -
\lambda^{-1})\cos3\varphi .
$$
The equality $\det L =0$ is equivalent to $\cos3\varphi = (\lambda +
\lambda^{-1})/2,$ which is impossible since $\lambda + \lambda^{-1}>
2$.

\vspace{2mm}

It follows from Corollary 2.1 that every coefficient of the system
lies in $\Q(\Gamma,\lambda).$ Solving this system by Cramer's rule,
we conclude that every $b_{ii}$ lies in $\Q(\Gamma,\lambda).$
\end{proof}

\begin{lemma}
Let $\Gamma$ be a subgroup of  ${\rm SU}(2,1)$ containing a
loxodromic element $A={\rm diag}(\lambda_1, \lambda_2, \lambda_3)$,
where $\lambda_1=\lambda e^{i\varphi}$, $\lambda_2=e^{-i2\varphi}$,
$ \lambda_3=\lambda^{-1} e^{i\varphi}.$ Then for any element
$B=(b_{ij}) \in \Gamma$, the products $b_{12}b_{21}, \ b_{13}b_{31},
 \ b_{23}b_{32}$ are in $\Q(\Gamma,\lambda).$
\end{lemma}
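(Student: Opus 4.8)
The plan is to rerun the argument of Lemma 2.2 with $B$ replaced by $B^{2}$, because the products we want sit on the diagonal of $B^{2}$. Put $x=b_{12}b_{21}$, $y=b_{13}b_{31}$, $z=b_{23}b_{32}$; a direct multiplication gives $(B^{2})_{11}=b_{11}^{2}+x+y$, $(B^{2})_{22}=x+b_{22}^{2}+z$ and $(B^{2})_{33}=y+z+b_{33}^{2}$. Since $A={\rm diag}(\lambda_{1},\lambda_{2},\lambda_{3})$ and $A^{-1}={\rm diag}(\bar\lambda_{3},\bar\lambda_{2},\bar\lambda_{1})$, taking the traces of $B^{2}$, $AB^{2}$ and $A^{-1}B^{2}$ yields
$$
\begin{array}{l}
{\rm tr}(B^{2})=b_{11}^{2}+b_{22}^{2}+b_{33}^{2}+2x+2y+2z,\\
{\rm tr}(AB^{2})=\lambda_{1}b_{11}^{2}+\lambda_{2}b_{22}^{2}+\lambda_{3}b_{33}^{2}+(\lambda_{1}+\lambda_{2})x+(\lambda_{1}+\lambda_{3})y+(\lambda_{2}+\lambda_{3})z,\\
{\rm tr}(A^{-1}B^{2})=\bar\lambda_{3}b_{11}^{2}+\bar\lambda_{2}b_{22}^{2}+\bar\lambda_{1}b_{33}^{2}+(\bar\lambda_{2}+\bar\lambda_{3})x+(\bar\lambda_{1}+\bar\lambda_{3})y+(\bar\lambda_{1}+\bar\lambda_{2})z.
\end{array}
$$

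I would read these as a linear system in the unknowns $x,y,z$. The three left-hand sides are traces of elements of $\Gamma$, hence lie in $\Q(\Gamma)$; by Corollary 2.1 (applied to $\langle A\rangle\subseteq\Gamma$) the eigenvalues $\lambda_{i}$ and their complex conjugates lie in $\Q(\Gamma,\lambda)$; and by Lemma 2.2 the diagonal entries $b_{11},b_{22},b_{33}$ lie in $\Q(\Gamma,\lambda)$. Moving the $b_{ii}^{2}$-terms to the left, we obtain a linear system for $x,y,z$ whose constant terms lie in $\Q(\Gamma,\lambda)$ and whose coefficient matrix
$$
N=\left[
\begin{array}{ccc}
2 & 2 & 2\\
\lambda_{1}+\lambda_{2} & \lambda_{1}+\lambda_{3} & \lambda_{2}+\lambda_{3}\\
\bar\lambda_{2}+\bar\lambda_{3} & \bar\lambda_{1}+\bar\lambda_{3} & \bar\lambda_{1}+\bar\lambda_{2}
\end{array}
\right]
$$
also has all entries in $\Q(\Gamma,\lambda)$.

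The only step that requires an idea is to show $\det N\neq 0$. Using $\lambda_{i}+\lambda_{j}={\rm tr}(A)-\lambda_{k}$ and $\bar\lambda_{i}+\bar\lambda_{j}={\rm tr}(A^{-1})-\bar\lambda_{k}$ for $\{i,j,k\}=\{1,2,3\}$, I would subtract $\frac{1}{2}{\rm tr}(A)$ times the first row from the second and $\frac{1}{2}{\rm tr}(A^{-1})$ times the first row from the third; after pulling out scalar factors and a column permutation this identifies $\det N$ with $-2\det L$, where $L$ is exactly the matrix
$$
L=\left[
\begin{array}{ccc}
1 & 1 & 1\\
\lambda_{1} & \lambda_{2} & \lambda_{3}\\
\bar\lambda_{3} & \bar\lambda_{2} & \bar\lambda_{1}
\end{array}
\right]
$$
from the proof of Lemma 2.2, where $\det L=(\lambda^{-2}-\lambda^{2})+2(\lambda-\lambda^{-1})\cos3\varphi$ was already seen to be nonzero (its vanishing would force $\cos3\varphi=(\lambda+\lambda^{-1})/2$, impossible since $\lambda+\lambda^{-1}>2$). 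Hence $\det N\neq 0$, and Cramer's rule gives $x,y,z\in\Q(\Gamma,\lambda)$. I expect the whole difficulty to be this last determinant; the point, as in Lemma 2.2, is that the triple $B^{2},AB^{2},A^{-1}B^{2}$ (rather than, say, $B^{2},AB^{2},A^{2}B^{2}$) is chosen exactly so that the $3\times 3$ determinant reduces to $\det L$.
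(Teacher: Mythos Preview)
Your proof is correct, but it takes a different route from the paper's. The paper uses $C=BAB$ rather than $B^{2}$: the diagonal entries of $BAB$ are $c_{11}=\lambda_{1}b_{11}^{2}+\lambda_{2}x+\lambda_{3}y$, $c_{22}=\lambda_{1}x+\lambda_{2}b_{22}^{2}+\lambda_{3}z$, $c_{33}=\lambda_{1}y+\lambda_{2}z+\lambda_{3}b_{33}^{2}$, and Lemma~2.2 (applied to $C\in\Gamma$) puts each $c_{ii}$ in $\Q(\Gamma,\lambda)$. After moving the $b_{ii}^{2}$-terms to the right, the coefficient matrix for $(x,y,z)$ is
\[
\left[\begin{array}{ccc}\lambda_{2}&\lambda_{3}&0\\ \lambda_{1}&0&\lambda_{3}\\ 0&\lambda_{1}&\lambda_{2}\end{array}\right],
\]
whose determinant is $-2\lambda_{1}\lambda_{2}\lambda_{3}=-2\det A=-2$; no appeal to $\det L$ is needed.

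Your route via $B^{2}$ works as well, and could in fact be shortened: since $B^{2}\in\Gamma$, Lemma~2.2 already gives $(B^{2})_{ii}\in\Q(\Gamma,\lambda)$ directly, so after subtracting $b_{ii}^{2}$ you get $x+y,\ x+z,\ y+z\in\Q(\Gamma,\lambda)$, a system whose matrix also has determinant $-2$. By instead writing out the three traces ${\rm tr}(B^{2}),{\rm tr}(AB^{2}),{\rm tr}(A^{-1}B^{2})$ you have effectively merged the proof of Lemma~2.2 (for $B^{2}$) with this final step, which is why you end up having to reduce $\det N$ back to $\det L$. Either way the argument is sound; the paper's choice of $BAB$ just packages the $A$-weighting and the squaring into a single element of $\Gamma$, so that one invocation of Lemma~2.2 suffices and the resulting determinant is immediate.
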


\begin{proof} Let $C=BAB$. First, we compute the diagonal elements $c_{ii}$
of the matrix $C$. They are

$$ c_{11}=\lambda_1 b_{11}^2 +\lambda_2 b_{12}b_{21} + \lambda_3
b_{13}b_{31},$$
$$ c_{22}=\lambda_1 b_{12}b_{21} +\lambda_2 b_{22}^2 + \lambda_3
b_{23}b_{32},$$
$$ c_{33}=\lambda_1 b_{13}b_{31} +\lambda_2 b_{23}b_{32} + \lambda_3
b_{33}^2.$$ Then by applying Lemma 2.2, we have that $c_{ii}$ is in
$\Q(\Gamma,\lambda)$.  Also, by the same reason, $b_{ii}$ is in
$\Q(\Gamma,\lambda)$. Therefore, we can re-write these equalities in
the following form

$$\lambda_2 b_{12}b_{21} + \lambda_3 b_{13}b_{31} = t_1,$$
$$\lambda_1 b_{12}b_{21} + \lambda_3 b_{23}b_{32}=t_2,$$
$$\lambda_1 b_{13}b_{31} +\lambda_2 b_{23}b_{32}=t_3,$$
where $t_1, t_2, t_3$ are some elements of $\Q(\Gamma,\lambda).$

We consider these equalities as a system of linear equations in
three unknowns $x_1= b_{12}b_{21}$, $x_2=b_{13}b_{31}$,
$x_3=b_{23}b_{32}$. The matrix $L$ of this system is

$$L= \left[
\begin{array}{ccc}
\lambda_2 & \lambda_3 & 0 \\
\lambda_1 & 0 & \lambda_3 \\
0 & \lambda_1 & \lambda_2 \\
\end{array}
\right].
$$
A short computation shows that $\det L =-2\lambda_1 \lambda_2
\lambda_3 = -2\det A  \neq 0.$ Solving this system by Cramer's rule,
we conclude that every $x_i$ lies in $\Q(\Gamma,\lambda).$
\end{proof}

\begin{lemma}
Let $\Gamma$ be a subgroup of  ${\rm SU}(2,1)$ containing a
loxodromic element $A={\rm diag}(\lambda_1, \lambda_2, \lambda_3)$,
where $\lambda_1=\lambda e^{i\varphi}$, $\lambda_2=e^{-i2\varphi}$,
$ \lambda_3=\lambda^{-1} e^{i\varphi}.$ Then for any element
$B=(b_{ij}) \in \Gamma$, the products $b_{12}\bar{b}_{32}$,
$b_{13}\bar{b}_{31},$ $b_{23}\bar{b}_{21}$ are in
$\Q(\Gamma,\lambda).$
\end{lemma}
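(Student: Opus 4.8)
The plan is to imitate the proof of Lemma 2.3, replacing the auxiliary matrix $BAB$ used there by the conjugate $C=BAB^{-1}$ and exploiting the fact, recorded in Section 1, that $B^{-1}$ is the Hermitian anti-transpose of $B$. First I collect the ingredients. Since $\Gamma$ is a group, $C\in\Gamma$, so by Lemma 2.2 the diagonal entries $c_{11},c_{22},c_{33}$ of $C$ lie in $\Q(\Gamma,\lambda)$; applying Lemma 2.2 to $B$ itself, the diagonal entries $b_{11},b_{22},b_{33}$ lie in $\Q(\Gamma,\lambda)$, and by Corollary 2.1 so do $\lambda_1,\lambda_2,\lambda_3$. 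Finally $\Q(\Gamma,\lambda)$ is closed under complex conjugation, because $\Q(\Gamma)$ is (as ${\rm tr}(B^{-1})=\overline{{\rm tr}(B)}$) and $\lambda$ is real; in particular $\overline{b_{11}},\overline{b_{22}},\overline{b_{33}}\in\Q(\Gamma,\lambda)$.

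Next I compute the diagonal of $C=BAB^{-1}$. Using $A={\rm diag}(\lambda_1,\lambda_2,\lambda_3)$ and the anti-transpose expression for $B^{-1}$, a direct calculation gives
\begin{align*}
c_{11}&=\lambda_1 b_{11}\overline{b_{33}}+\lambda_2 b_{12}\overline{b_{32}}+\lambda_3 b_{13}\overline{b_{31}},\\
c_{22}&=\lambda_1 b_{21}\overline{b_{23}}+\lambda_2\,\mod{b_{22}}^2+\lambda_3 b_{23}\overline{b_{21}},\\
c_{33}&=\lambda_1 b_{31}\overline{b_{13}}+\lambda_2 b_{32}\overline{b_{12}}+\lambda_3 b_{33}\overline{b_{11}}.
\end{align*}
Put $x_1=b_{12}\overline{b_{32}}$, $x_2=b_{13}\overline{b_{31}}$, $x_3=b_{23}\overline{b_{21}}$; then $b_{32}\overline{b_{12}}=\overline{x_1}$, $b_{31}\overline{b_{13}}=\overline{x_2}$, $b_{21}\overline{b_{23}}=\overline{x_3}$, while $b_{11}\overline{b_{33}}$, $\mod{b_{22}}^2$ and $b_{33}\overline{b_{11}}$ already lie in $\Q(\Gamma,\lambda)$ by the previous paragraph. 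Moving those known terms to the other side, I obtain elements $s_1,s_2,s_3\in\Q(\Gamma,\lambda)$ with
\[
\lambda_2 x_1+\lambda_3 x_2=s_1,\qquad\lambda_3 x_3+\lambda_1\overline{x_3}=s_2,\qquad\lambda_2\overline{x_1}+\lambda_1\overline{x_2}=s_3,
\]
and, conjugating the first two of these relations (legitimate since $\Q(\Gamma,\lambda)$ is conjugation-closed), also $\overline{\lambda_2}\,\overline{x_1}+\overline{\lambda_3}\,\overline{x_2}=\overline{s_1}$ and $\overline{\lambda_1}x_3+\overline{\lambda_3}\,\overline{x_3}=\overline{s_2}$, with all right-hand sides in $\Q(\Gamma,\lambda)$.

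Now I solve two $2\times2$ systems with coefficients in $\Q(\Gamma,\lambda)$. In the unknowns $\overline{x_1},\overline{x_2}$, the relations $\lambda_2\overline{x_1}+\lambda_1\overline{x_2}=s_3$ and $\overline{\lambda_2}\,\overline{x_1}+\overline{\lambda_3}\,\overline{x_2}=\overline{s_1}$ have determinant $\lambda_2\overline{\lambda_3}-\lambda_1\overline{\lambda_2}=\lambda^{-1}e^{-3i\varphi}-\lambda e^{3i\varphi}$, which vanishes only if $\lambda^2e^{6i\varphi}=1$; taking moduli this would force $\lambda=1$, contrary to hypothesis. Hence by Cramer's rule $\overline{x_1},\overline{x_2}\in\Q(\Gamma,\lambda)$, so $x_1,x_2\in\Q(\Gamma,\lambda)$. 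Similarly, in the unknowns $x_3,\overline{x_3}$, the relations $\lambda_3 x_3+\lambda_1\overline{x_3}=s_2$ and $\overline{\lambda_1}x_3+\overline{\lambda_3}\,\overline{x_3}=\overline{s_2}$ have determinant $\lambda_3\overline{\lambda_3}-\lambda_1\overline{\lambda_1}=\lambda^{-2}-\lambda^2\neq0$, so $x_3\in\Q(\Gamma,\lambda)$, which is the assertion.

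The argument is routine linear algebra once the setup is in place; the points requiring care are writing $B^{-1}$ correctly as the Hermitian anti-transpose of $B$, observing that the $(3,3)$ diagonal entry of $C$ supplies exactly the conjugate pair $\overline{x_1},\overline{x_2}$ that the $(1,1)$ entry needs to form a nonsingular system, and verifying the two Cramer determinants are nonzero — all of which ultimately rest on $\lambda$ being real, positive and $\neq1$. I expect the only genuine decision point, and hence the main obstacle, to be the choice of auxiliary matrix: it is precisely taking $C=BAB^{-1}$ (rather than $BAB$, as in Lemma 2.3) that makes the three target products $b_{12}\overline{b_{32}}$, $b_{13}\overline{b_{31}}$, $b_{23}\overline{b_{21}}$ surface directly on the diagonal of an element of $\Gamma$.
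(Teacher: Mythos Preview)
Your proof is correct. The computations of the diagonal of $C=BAB^{-1}$ agree with the paper's, and both $2\times 2$ determinants are handled properly: $\lambda_2\overline{\lambda_3}-\lambda_1\overline{\lambda_2}=\lambda^{-1}e^{-3i\varphi}-\lambda e^{3i\varphi}$ and $|\lambda_3|^2-|\lambda_1|^2=\lambda^{-2}-\lambda^2$ are nonzero precisely because $\lambda>0$, $\lambda\neq 1$.

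Your route differs from the paper's in one respect. The paper introduces a second auxiliary matrix $D=B^{-1}AB$, computes its diagonal as well, and then assembles a single $3\times 3$ system in $x_1,x_2,x_3$ by combining the first equation from $C$, the conjugate of the third equation from $C$, and the conjugate of the first equation from $D$. You instead stay with $C$ alone: for $x_1,x_2$ you use the $(1,1)$- and $(3,3)$-entries of $C$ (the same pair the paper uses), but for $x_3$ you observe that the $(2,2)$-entry already couples $x_3$ with $\overline{x_3}$, so conjugating that single relation yields a nonsingular $2\times 2$ system directly. This is a genuine and slightly more economical variant: it avoids the second auxiliary element $D$ entirely, at the cost of treating $x_3$ separately from $x_1,x_2$ rather than in one unified $3\times 3$ system.
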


\begin{proof}
Let $C=BAB^{-1}$. Let us compute the diagonal elements $c_{ii}$ of
the matrix $C$. They are
$$c_{11}=\lambda_1 b_{11}\bar{b}_{33} +\lambda_2 b_{12}\bar{b}_{32} +
\lambda_3 b_{13}\bar{b}_{31},
$$
$$c_{22}=\lambda_1 b_{21}\bar{b}_{23} +\lambda_2 b_{22}\bar{b}_{22} +
\lambda_3 b_{23}\bar{b}_{21},
$$
$$c_{33}=\lambda_1 b_{31}\bar{b}_{13} +\lambda_2 b_{32}\bar{b}_{12} +
\lambda_3 b_{33}\bar{b}_{11}.
$$
By applying Lemma 2.2, we get that $c_{ii}$ lies in
$\Q(\Gamma,\lambda)$. Since $\lambda$ is real, we have that the
field $\Q(\Gamma,\lambda)$ is invariant under complex conjugation.
Hence, $b_{11}\bar{b}_{33}$, $b_{22}\bar{b}_{22}$,
$b_{33}\bar{b}_{11}$ are all in $\Q(\Gamma,\lambda)$. Therefore, we
can re-write these equalities in the following form

$$\lambda_2 b_{12}\bar{b}_{32} + \lambda_3 b_{13}\bar{b}_{31}=t_1,$$
$$\lambda_1 b_{21}\bar{b}_{23} + \lambda_3 b_{23}\bar{b}_{21}=t_2,$$
$$\lambda_1 b_{31}\bar{b}_{13} +\lambda_2 b_{32}\bar{b}_{12}=t_3,$$
where $t_1, t_2, t_3$ are some elements of $\Q(\Gamma,\lambda).$

\vspace{2mm}

Now let $D=B^{-1}AB$. Again we compute the diagonal elements
$d_{ii}$ of this matrix. They are

$$d_{11}=\lambda_1 b_{11}\bar{b}_{33} +\lambda_2 b_{21}\bar{b}_{23} +
\lambda_3 b_{31}\bar{b}_{13},
$$
$$d_{22}=\lambda_1 b_{12}\bar{b}_{32} +\lambda_2 b_{22}\bar{b}_{22} +
\lambda_3 b_{32}\bar{b}_{12},
$$
$$d_{33}=\lambda_1 b_{13}\bar{b}_{31} +\lambda_2 b_{23}\bar{b}_{21} +
\lambda_3 b_{33}\bar{b}_{11}.
$$
By applying the above arguments, we re-write these equalities in the
following form

$$\lambda_2 b_{21}\bar{b}_{23} + \lambda_3 b_{31}\bar{b}_{13}=s_1,$$
$$\lambda_1 b_{12}\bar{b}_{32} + \lambda_3 b_{31}\bar{b}_{12}=s_2,$$
$$\lambda_1 b_{13}\bar{b}_{32} +\lambda_2 b_{23}\bar{b}_{21}=s_3,$$
where $s_1, s_2, s_3$ are some elements of $\Q(\Gamma,\lambda).$

\vspace{2mm}

We consider the first and the conjugate third equality defined by
$C$ and the conjugate first equality defined by $D$. So, we have the
following equalities

$$\lambda_2 b_{12}\bar{b}_{32} + \lambda_3 b_{13}\bar{b}_{31}=t_1,$$
$$\bar{\lambda}_1 b_{13}\bar{b}_{31} +\bar{\lambda}_2 b_{12}\bar{b}_{32}=\bar{t}_3,$$
$$\bar{\lambda}_2 b_{23}\bar{b}_{21} + \bar{\lambda}_3 b_{13}\bar{b}_{31}=\bar{s}_1.$$
These equalities define a system of linear equations in three
unknowns $x_1= b_{12} \bar{b}_{32}$, $x_2=b_{13}\bar{b}_{31}$,
$x_3=b_{23} \bar{b}_{21}$. The matrix $L$ of this system is
$$L= \left[
\begin{array}{ccc}
\lambda_2 & \lambda_3 &0 \\
\bar{\lambda}_2 & \bar{\lambda}_1 & 0 \\
0 & \bar{\lambda}_3 & \bar{\lambda}_2 \\
\end{array}
\right].$$
The determinant $\det L = \lambda_2(\bar{\lambda}_1
\bar{\lambda}_2) -\lambda_3 \bar{\lambda}_{2}^2 =
e^{2i\varphi}(e^{-3i\varphi}\lambda - e^{3i\varphi}\lambda^{-1}).$
It is seen that $\det L=0$ if and only if $\lambda^2=e^{6i\varphi}$.
This is impossible since $\lambda>0, \lambda \neq 1.$  Solving this
system by Cramer's rule, we conclude that every $x_i$ lies in
$\Q(\Gamma,\lambda).$
\end{proof}

\vspace{2mm}

The following proposition is crucial  in the proof of our main
result.

\begin{prop} Let $\Gamma = \langle A, B\rangle$  be an irreducible subgroup of
${\rm SU}(2,1)$, where $A$ is a  loxodromic element with eigenvalues
$\lambda_1 = \lambda e^{i\varphi},$ $\lambda_2 =e^{-2i\varphi}$,
$\lambda_3 = \lambda^{-1}e^{i\varphi}$. Then $\Gamma$ is conjugate
in ${\rm SU}(2,1)$ to a subgroup of ${\rm SU}(2,1,
\Q(\Gamma,\lambda))$.
\end{prop}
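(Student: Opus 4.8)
The plan is to conjugate $\Gamma$ by a suitable \emph{diagonal} element of $\su{2}$. Since such a $D$ commutes with the diagonal matrix $A$, and the eigenvalues $\lambda_1,\lambda_2,\lambda_3$ of $A$ lie in $k:=\Q(\Gamma,\lambda)$ by Corollary 2.1, we have $DAD^{-1}=A\in {\rm M}(3,k)$; as ${\rm SU}(2,1,k)=\su{2}\cap{\rm M}(3,k)$ is a group, it suffices to produce $D$ with $DBD^{-1}\in{\rm M}(3,k)$, for then $D\Gamma D^{-1}=\langle A,DBD^{-1}\rangle\subset{\rm SU}(2,1,k)$. The diagonal elements of $\su{2}$ are exactly the matrices $D={\rm diag}(d,\bar d/d,1/\bar d)$ with $d\in\C\setminus\{0\}$, and conjugation by such a $D$ fixes the diagonal of $B=(b_{ij})$, multiplies $b_{12},b_{21},b_{23},b_{32}$ by $\mu,\ \mu^{-1},\ \bar\mu,\ \bar\mu^{-1}$ respectively, where $\mu:=d^2/\bar d$ runs over all of $\C\setminus\{0\}$, and multiplies $b_{13},b_{31}$ by the \emph{positive reals} $|d|^2,|d|^{-2}$. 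The point I expect to be the genuine obstacle is exactly this last feature: the $(1,3)$ and $(3,1)$ entries cannot be rescaled freely, so one cannot normalise $b_{13}$ directly; instead one must normalise some other off-diagonal entry and then recover $b_{13},b_{31}$ from the Hermitian relations that characterise $\su{2}$.

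First I would extract the needed non-vanishing from irreducibility. Since $A$ is diagonal with distinct eigenvalues, all three coordinate lines and all three coordinate planes are $A$-invariant, so $B$ can leave none of them invariant. Writing $v_1,v_2,v_3$ for the rows of $B$ and using the identities $\herm{v_i,v_j}$ valid for elements of $\su{2}$: from $\herm{v_1,v_1}=0$ one gets that $b_{13}=0$ forces $b_{12}=0$, whence $\C e_2+\C e_3$ is $B$-invariant — impossible; hence $b_{13}\neq0$, and symmetrically (from $\herm{v_3,v_3}=0$) $b_{31}\neq0$. In the same way $b_{12},b_{32}$ are not both zero (else $\C e_2$ is $B$-invariant) and $b_{21},b_{23}$ are not both zero (else $\C e_1+\C e_3$ is $B$-invariant). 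Conjugating, if necessary, by the order-two element of $\su{2}$ interchanging $e_1$ and $e_3$ — which replaces $A$ by ${\rm diag}(\lambda_3,\lambda_2,\lambda_1)$, again loxodromic of the required shape and with the same field $k$ — I may assume $b_{12}\neq0$.

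Now take $D$ with $\mu=1/b_{12}$ and put $\tilde B=DBD^{-1}=(\tilde b_{ij})$. Its diagonal equals that of $B$, hence lies in $k$ by Lemma 2.2; moreover $\tilde b_{12}=1$, $\tilde b_{21}=b_{12}b_{21}\in k$ by Lemma 2.3, and $\tilde b_{32}=\bar b_{12}b_{32}=\overline{b_{12}\bar b_{32}}\in k$ by Lemma 2.4 (note $k$ is stable under complex conjugation, since $\Q(\Gamma)$ is and $\lambda$ is real). For $\tilde b_{23}$ one splits into two cases. If $b_{21}\neq0$, then $\tilde b_{21}\in k\setminus\{0\}$ and $\tilde b_{23}\overline{\tilde b_{21}}=b_{23}\bar b_{21}\in k$ (Lemma 2.4), so $\tilde b_{23}\in k$; then $\herm{\tilde v_1,\tilde v_2}=0$ expresses $\tilde b_{13}\overline{\tilde b_{21}}$ in terms of $\tilde b_{11},\tilde b_{22},\tilde b_{23},\tilde b_{12}$, so $\tilde b_{13}\in k$. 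If $b_{21}=0$, then $b_{23}\neq0$, and $b_{32}\neq0$ as well (otherwise $\herm{v_2,v_3}=0$ would read $b_{23}\bar b_{31}=0$ with $b_{23},b_{31}\neq0$); hence $\tilde b_{32}\in k\setminus\{0\}$ and $\tilde b_{23}\tilde b_{32}=b_{23}b_{32}\in k$ (Lemma 2.3), so $\tilde b_{23}\in k\setminus\{0\}$, and then $\herm{\tilde v_2,\tilde v_3}=0$ (with $\tilde b_{21}=0$) gives $\tilde b_{31}\in k$. In either case one of $\tilde b_{13},\tilde b_{31}$ has been shown to lie in $k\setminus\{0\}$, and the other then lies in $k$ because $\tilde b_{13}\tilde b_{31}=b_{13}b_{31}\in k$ by Lemma 2.3 while $b_{13}\neq0\neq b_{31}$. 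Thus every entry of $\tilde B$ lies in $k$, and $D\Gamma D^{-1}\subset{\rm SU}(2,1,\Q(\Gamma,\lambda))$, as required.

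As indicated, the main difficulty is not any single computation but organising the bookkeeping around the possible vanishing of off-diagonal entries: Lemmas 2.3 and 2.4 only control certain \emph{products} of pairs of entries, so whenever one entry of such a pair vanishes the desired conclusion must be reached instead through the Hermitian relations $\herm{v_i,v_j}$ defining $\su{2}$, and one has to verify that irreducibility, together with those relations, rules out precisely the zero-patterns for which the argument would otherwise break down.
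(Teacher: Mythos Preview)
Your proof is correct and takes essentially the same route as the paper's: diagonalise $A$, use a further diagonal conjugation in $\su{2}$ to set $b_{12}=1$, and then recover the remaining entries of $B$ from Lemmas~2.2--2.4 combined with the row relations $\herm{v_i,v_j}$. Your preliminary observation that irreducibility together with $\herm{v_1,v_1}=\herm{v_3,v_3}=0$ forces $b_{13}\neq 0\neq b_{31}$ is a nice streamlining---the paper does not make this explicit and instead carries a ``$b_{13}=0$'' subcase through its analysis (which, by your remark, is in fact vacuous once $b_{12}\neq 0$).
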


\begin{proof} We will show that one needs at most two conjugations to get the
result we need.

\vspace{2mm}

First, by applying a suitable conjugation in ${\rm SU}(2,1)$, we may
assume that

$$
A = \left[
\begin{array}{ccc}
\lambda_1 & 0 & 0 \\
0 & \lambda_2 & 0 \\
0& 0 & \lambda_3 \end{array}
 \right],\ \ \ \
B= \left[
\begin{array}{ccc}
b_{11} & b_{12} & b_{13} \\
b_{21} & b_{22} & b_{23} \\
b_{31} & b_{32} & b_{33} \\
\end{array}
\right].
$$

\vspace{2mm}

We denote by $v_1=(b_{11}, b_{12}, b_{13})^T,$  $v_2= (b_{21},
b_{22}, b_{23})^T,$ $v_3=(b_{31}, b_{32},b_{33})^T$ the vectors
defined by the rows of the matrix $B$.

\vspace{2mm}

Let $X \in {\rm SU}(2,1)$ be a loxodromic element (elliptic if r=1)
such that
$$
X= \left[
\begin{array}{ccc}
re^{i\alpha}& 0 & 0 \\
0 & e^{-2i\alpha}& 0 \\
0& 0 & r^{-1}e^{i\alpha} \end{array}
 \right].\ \ \ \
$$
Then $XAX^{-1}=A$ and $XBX^{-1}$ is

$$XBX^{-1}= \left[
\begin{array}{ccc}
b_{11} & re^{3i\alpha}b_{12} & r^{2}b_{13} \\
r^{-1}e^{-3i\alpha}b_{21} & b_{22} & re^{-3i\alpha}b_{23} \\
r^{-2}b_{31} & r^{-1}e^{3i\alpha}b_{32} & b_{33} \\
\end{array}
\right].
$$

\vspace{2mm}

If the entries $b_{12},$ $ b_{32}$ of $B$ are all equal to $0$, then
the group $\Gamma$ is not irreducible since in this case $A$ and $B$
have a common invariant complex line spanned by the vector
$(0,1,0)^{T}.$ In this case, $\Gamma$ is a $\C$-group. Therefore, at
least one of the numbers $b_{12}, b_{32}$ is not equal to $0$. Let
us suppose that $b_{12}\neq 0$. In this case, by normalizing the
elements $A$ and $B$ using  $X$, we may assume without loss of
generality that $b_{12}=1$.

\vspace{2mm}

From the above results we know that $b_{ii},$ $b_{12}b_{21}$,
$b_{13}b_{31}$, $b_{23}b_{32}$, $b_{12}\bar{b}_{32}$,
$b_{13}\bar{b}_{31},$ $b_{23}\bar{b}_{21}$ are all in
$\Q(\Gamma,\lambda)$.

\vspace{2mm}

Since $b_{12}=1$, we get that $b_{21}$ and $b_{32}$ are in
$\Q(\Gamma, \lambda)$. Let us first consider the case $b_{21} = 0$.
Then we have that $b_{31}\neq 0$. Indeed, if  $b_{31}=0$, then $A$
and $B$ have a common invariant complex line spanned by the vector
$(1,0,0)^{T}.$ This is impossible because $\Gamma$ is irreducible.
Let us assume now that $b_{32}=0$. Since the vectors $v_2$ and $v_3$
are orthogonal, we get that $ \herm{v_2,v_3} = b_{23}
\bar{b}_{31}=0$. So, in this case, $b_{23}=0$. This implies that
$b_{22}\neq 0$. Hence, $\herm{v_1,v_2} =\bar{b}_{22} \neq 0,$ a
contradiction. Therefore, we have that $b_{32}\neq 0$. It follows
that $b_{23}$ lies in $\Q(\Gamma, \lambda)$. Since the vectors $v_1$
and $v_2$ are orthogonal, we have that $b_{23} \neq 0.$ Then, by
considering the equality $\herm{v_2,v_3}= b_{22}\bar{b}_{32}+
b_{23}\bar{b}_{31}=0$, we conclude that $b_{31}$ lies in $\Q(\Gamma,
\lambda)$. Since  $b_{31}\neq 0,$ we get that $b_{13}$ lies in
$\Q(\Gamma, \lambda)$. This shows that all the entries of the matrix
$B$ are in $\Q(\Gamma, \lambda)$. Therefore, we proved the
proposition in this case.

Now let us assume that $b_{21}\neq 0.$ It follows immediately that
$b_{23}$ lies in $\Q(\Gamma, \lambda)$.

We write $\herm{v_1,v_2}=b_{11}\bar{b}_{23} +\bar{b}_{22} +
b_{13}\bar{b}_{21}=0.$ Since $b_{21}\neq 0$, this implies that
$b_{13}$ is in  $\Q(\Gamma, \lambda)$. If  $b_{13} \neq 0$, we get
that $b_{31}$ is in  $\Q(\Gamma, \lambda).$ Suppose that $b_{13} =
0$. In this case, $b_{23}\neq 0$, since otherwise $A$ and $B$ have a
common invariant complex line spanned by the vector $(0,0,1)^{T}.$
This is impossible because $\Gamma$ is irreducible.

We write $\herm{v_2,v_3}= b_{21}\bar{b}_{33}+ b_{22}\bar{b}_{32} +
b_{23}\bar{b}_{31}=0.$ Since $b_{23}\neq 0$, we get that $b_{31}$
lies in $\Q(\Gamma, \lambda)$.

Summarizing everything, we get that in the case $b_{12} \neq 0$
after conjugation all the entries of the matrix $B$ lie in
$\Q(\Gamma, \lambda).$ The case $b_{12} = 0$, $b_{32}\neq 0$ is
similar to the previous one.
\end{proof}

\vspace{2mm}

Next we prove that any irreducible subgroup of ${\rm SU}(2,1)$
contains a loxodromic element. First, we will prove the following
lemma.

\begin{lemma} Let $\Gamma$ be a non-elementary subgroup of
 ${\rm SU}(2,1).$  If $\Gamma$ contains a parabolic element, then  $\Gamma$
 contains a loxodromic element.
\end{lemma}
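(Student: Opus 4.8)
The plan is to produce, for large $n$, an explicit loxodromic element of the form $P^{n}g\in\Gamma$, where $P\in\Gamma$ is a lift of the given parabolic element and $g\in\Gamma$ is any element not fixing its fixed point. The driving mechanism is the one familiar from the Fuchsian case: I will show that $|{\rm tr}(P^{n}g)|\to\infty$ as $n\to\infty$, and this already finishes the proof, because an element of ${\rm SU}(2,1)$ with $|{\rm tr}|>3$ is automatically loxodromic: an elliptic or parabolic element of ${\rm SU}(2,1)$ has all three eigenvalues on the unit circle, hence trace of modulus at most $3$.

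\textit{Set-up.} Let $P\in\Gamma$ be a lift of a parabolic element of ${\rm PU}(2,1)$, and let $p\in\partial\ch{2}$ be its unique fixed point. Since $\Gamma$ is non-elementary it cannot fix $p$ (otherwise $\{p\}$ is a finite orbit and $\Gamma$ is elementary), so we may choose $g\in\Gamma$ with $g(p)\ne p$. After conjugating $\Gamma$ in ${\rm SU}(2,1)$ we may assume $p=[e_{1}]$. Then $P$ preserves $\mathbb{C}e_{1}$, hence also its Hermitian orthogonal complement $\mathbb{C}e_{1}\oplus\mathbb{C}e_{2}$, so $P$ is upper triangular. As $P$ is parabolic its eigenvalues lie on the unit circle, so $P_{11}=\mu$ with $|\mu|=1$; the identity $\herm{Pe_{1},Pe_{3}}=\herm{e_{1},e_{3}}=1$ together with $|\mu|=1$ forces $P_{33}=\mu$, and $\det P=1$ forces $P_{22}=\mu^{-2}$.

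\textit{Main estimate.} The key elementary remark is that $g(p)\ne p$ forces $g_{31}\ne0$: the vector $ge_{1}$ is isotropic and its third coordinate is $g_{31}$, so if $g_{31}=0$ then $\herm{ge_{1},ge_{1}}=|g_{21}|^{2}=0$, whence $ge_{1}\in\mathbb{C}e_{1}$ and $g(p)=p$, a contradiction. Now $P^{n}$ is upper triangular with diagonal $(\mu^{n},\mu^{-2n},\mu^{n})$, so
$$ {\rm tr}(P^{n}g)=\mu^{n}(g_{11}+g_{33})+\mu^{-2n}g_{22}+(P^{n})_{12}\,g_{21}+(P^{n})_{23}\,g_{32}+(P^{n})_{13}\,g_{31}. $$
The first two terms stay bounded. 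A direct computation with $P$ in normal form — distinguishing the cases ``$P$ unipotent'' and ``$P$ ellipto-parabolic'', and inside the unipotent case the vertical and the non-vertical Heisenberg translations — shows that $|(P^{n})_{13}|\to\infty$, and that the term $(P^{n})_{13}g_{31}$ dominates $(P^{n})_{12}g_{21}+(P^{n})_{23}g_{32}$ (these last entries are bounded, or of strictly smaller order in $n$, in each case). Since $g_{31}\ne0$ we conclude $|{\rm tr}(P^{n}g)|\to\infty$, so $|{\rm tr}(P^{n}g)|>3$ for all large $n$ and $P^{n}g$ is loxodromic.

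The step requiring care is precisely this case check: that $|(P^{n})_{13}|\to\infty$ and that there is no cancellation against $(P^{n})_{12}g_{21}+(P^{n})_{23}g_{32}$. Geometrically it just records that a parabolic genuinely translates along the horospheres based at $p$, so that its iterates accumulate on $p$ at a rate polynomial in $n$; the verification is short in each of the finitely many normal forms. Everything else is routine. (One can also argue purely dynamically: the element $Q=gPg^{-1}$ is parabolic with fixed point $q=g(p)\ne p$, and the uniform convergence $P^{\pm n}\to p$ and $Q^{\pm n}\to q$ on compacta of $\overline{\ch{2}}$ away from $p$, resp.\ $q$, yields by a ping-pong argument that a suitable element such as $P^{n}Q^{n}$ has north--south dynamics for $n$ large; having two fixed points and none in $\ch{2}$, it is loxodromic.)
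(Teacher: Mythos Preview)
Your proposal is correct and follows essentially the same strategy as the paper: normalize so the parabolic fixes $[e_1]$, pick $g\in\Gamma$ with $g_{31}\neq 0$ (from non-elementarity), compute ${\rm tr}(P^n g)$, and show it tends to infinity so that $|{\rm tr}|>3$ forces loxodromicity. The paper differs only in that it writes out the normal forms of $P$ and of $P^n$ explicitly (splitting into the cases $P_{12}=0$ and $P_{12}\neq 0$, the latter after a further normalization making $P_{12}=1$) and then carries out the growth estimate by hand; you instead state the upper-triangular shape of $P$ conceptually and leave the dominance of $(P^n)_{13}g_{31}$ as a short case check, which is fine since in the first case $(P^n)_{12}=(P^n)_{23}=0$ while $(P^n)_{13}$ grows linearly, and in the second case $(P^n)_{13}$ is quadratic against linear $(P^n)_{12},(P^n)_{23}$. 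Your aside about the ping-pong/north--south argument is a genuine alternative the paper does not pursue.
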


\begin{proof}
Let $B \in \Gamma$ be parabolic. Then $B$ has a unique invariant
isotropic complex line in $V$. By normalizing  $\Gamma$ in ${\rm
SU}(2,1),$ we may assume without loss of generality that this line
is spanned by the vector $(1,0,0)^{T}.$ This implies that $B$ has
the following form:

$$B= \left[
\begin{array}{ccc}
b_{11}& b_{12} & b_{13} \\
0 & b_{22} & b_{23} \\
0 & b_{32} & b_{33} \\
\end{array}
\right].
$$

As before, we denote by $v_1=(b_{11}, b_{12}, b_{13})^T,$  $v_2=
(b_{21}, b_{22}, b_{23})^T,$ $v_3=(b_{31}, b_{32},b_{33})^T$ the
vectors defined by the rows of the matrix $B$.

\vspace{2mm}

We consider two cases: (1) $b_{12}=0,$ (2)  $b_{12} \neq 0.$

\vspace{2mm}

First, we consider the case $b_{12}=0$. We have that $
\herm{v_2,v_2} = |b_{22}|^2=1$. From this and the equality $
\herm{v_2,v_3}=0$, we get that $b_{32}=0.$ Then the equality $
\herm{v_1,v_2}=b_{11} \bar{b}_{23}= 0$ implies that $b_{23}=0.$ From
the equality $ \herm{v_1,v_3} =1$, we get that $b_{11}
\bar{b}_{33}=1.$ Since $B$ is parabolic, we have that $b_{13} \neq
0$ and that $|b_{11}|=1$. Hence $|b_{33}|=1$. We write
$b_{11}=e^{i\varphi}$ and $b_{13}=r e^{i\theta}$, $r>0$. Then the
equality $\herm{v_1,v_1}=0$ implies that $b_{13}=ri e^{i\varphi}$.
Therefore, when $b_{12}=0,$ the element $B$ after a suitable
normalization of  $\Gamma$ has the following form:

$$B= \left[
\begin{array}{ccc}
e^{i\varphi}& 0 & ri e^{i\varphi} \\
0 & e^{-2i\varphi} & 0 \\
0 & 0 & e^{i\varphi} \\
\end{array}
\right].
$$

We remark that if $\varphi=0$, then $B$ is unipotent and $B$ is
ellipto-parabolic otherwise.

\vspace{2mm}

Next, we consider the case $b_{12} \neq 0$. Using the same arguments
as in the first case, we get that $|b_{22}|=1$ and $b_{32}=0$. Since
$b_{12} \neq 0$,  by normalizing  $\Gamma$   using the element $X$
defined in the proof of Proposition 2.1, we may assume without loss
of generality that $b_{12}=1$. The equality $ \herm{v_1,v_3} =1$
implies that $b_{11}  \bar{b}_{33}=1.$ Note that in this case $B$ is
always unipotent. Hence $b_{11}=1$. This implies that
$b_{22}=b_{33}=1$. Then it follows from  the equality $
\herm{v_1,v_2} =0$ that $b_{23}=-1$. Now let us consider the
equality $\herm{v_1,v_1} = \bar{b}_{13} + b_{13} + 1 = 0$. It easy
to see that this equality is true if and only if $b_{13}= -1/2 +
si$, where $s$ is real.  Summarizing everything, we get that in the
case $b_{12} \neq 0$ after a suitable normalization of  $\Gamma$ the
element $B$ has the following form:

$$B= \left[
\begin{array}{ccc}
1& 1& \tau\\
0 & 1 & -1 \\
0 & 0 & 1 \\
\end{array}
\right],$$
where $\tau = -1/2 + si$, $s \in \R.$

\vspace{2mm}

Easy induction shows that for any $n \in \mathbb N$ in the first
case

$$B^n= \left[
\begin{array}{ccc}
e^{ni\varphi}& 0 & e^{ni\varphi}i n r \\
0 &e^{-2ni\varphi}  & 0 \\
0 & 0 & e^{ni\varphi} \\
\end{array}
\right]
$$
and

$$B^n= \left[\vspace{2mm}
\begin{array}{ccc}
1& n & n \tau +f(n) \\
0 & 1 & -n \\
0 & 0 & 1 \\
\end{array}
\right].
$$
in the second case, where the function $f:\mathbb N \rightarrow
\mathbb Z$ is defined by the following conditions: $f(1)=0$ and
$f(n+1)=f(n)-n$. It is easy to show that $f(n) = (1-n)n/2$.

\vspace{3mm}

Since $\Gamma$ is non-elementary it contains an element $C$ which
does not leave invariant the complex line spanned  by the vector
$(1,0,0)^{T}.$ Let $C=(c_{ij})$. Then we have that $c_{31}\neq 0$.
Easy computation shows that

$${\rm tr}(B^n C)= c_{11} e^{ni\varphi} +c_{31}e^{ni\varphi}i n r +
c_{22} e^{-2ni\varphi} + c_{33} e^{ni\varphi}$$ in the first case,
and

$${\rm tr}(B^n C) = c_{11} + c_{21} n + c_{31}(n \tau +f(n)) +
c_{22} - c_{32}n + c_{33}$$ in the second one.

\vspace{3mm}

Hence, in the first case

$$|{\rm tr}(B^n C)|= |c_{31}e^{ni\varphi}i n r - (-c_{11} e^{ni\varphi}
-c_{22} e^{-2ni\varphi} - c_{33} e^{ni\varphi})|$$

$$\geq |c_{31}e^{ni\varphi}i n r| - |(-c_{11} e^{ni\varphi} -c_{22}
e^{-2ni\varphi} - c_{33} e^{ni\varphi})|
$$

$$= |c_{31}| n r  - |(-c_{11} e^{ni\varphi} -c_{22}
e^{-2ni\varphi} - c_{33} e^{ni\varphi})|.
$$.

Since $c_{31} \neq 0,$ and $r>0$, this inequality implies that
$|{\rm tr}(B^n C)| $ tends to infinity when $n$ tends to infinity.

\vspace{2mm}

In the second case, we have that

$$|{\rm tr}(B^n C)| =|c_{11} + c_{21} n + c_{31}(n \tau +f(n)) +
c_{22} - c_{32}n + c_{33}|$$

$$\geq |c_{31}f(n)| - |-c_{31}n \tau -c_{11} - c_{21} n  - c_{22} +
c_{32}n  -c_{33}|$$

$$=|c_{31}||f(n)| - |-c_{31}n \tau -c_{11} - c_{21} n  - c_{22} +
c_{32}n  -c_{33}|.$$
Since $c_{31} \neq 0$ and $f(n)$ is quadratic,
this inequality implies that $|{\rm tr}(B^n C)| $ tends to infinity
when $n$ tends to infinity.

\vspace{2mm}

Thus, in both cases, we have that there exists $n_0$ such that
$|{\rm tr}(B^n C)|>3$ for all $n>n_0$. Then using Goldman's
classification of the elements in ${\rm SU}(2,1)$ \cite{Gol}, we get
that the elements $B^n C$ are loxodromic for all $n>n_0$. This
proves the lemma.
\end{proof}

\vspace{2mm}

We are very grateful to John Parker for his help in the proof of
this lemma.

\vspace{3mm}

In what follows, we will need the following fundamental result due
to Chen and Greenberg \cite{CG}:

\begin{prop} Let $\Gamma$ be a subgroup of ${\rm
SU}(2,1).$ Consider the natural action of  $\Gamma$ on $\ch{2} \cup
\partial {\ch{2}}$. If there is no
point in $\ch{2} \cup \partial {\ch{2}}$ or proper totally geodesic
submanifold in  $\ch{2}$ which is invariant under $\Gamma$, then
$\Gamma$ is either discrete or dense in ${\rm SU}(2,1)$.
\end{prop}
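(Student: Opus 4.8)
\vspace{2mm}

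The plan is to pass to the closure of $\Gamma$ and to use the structure theory of connected Lie subgroups of ${\rm SU}(2,1)$. Let $\overline{\Gamma}$ be the closure of $\Gamma$ in the Lie group ${\rm SU}(2,1)$; by the closed subgroup theorem it is an embedded Lie subgroup. If $\Gamma$ is not discrete, then $\dim\overline{\Gamma}\ge 1$, so the identity component $H=(\overline{\Gamma})_{0}$ is a nontrivial connected Lie subgroup, normal in $\overline{\Gamma}$. Since $\Gamma\subseteq\overline{\Gamma}$, conjugation by elements of $\Gamma$ acts on $H$ by automorphisms, so $\Gamma$ preserves every characteristic subgroup of $H$ and, more importantly, every totally geodesic submanifold or boundary point that is canonically attached to $H$. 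Thus the proposition reduces to the geometric statement that \emph{every proper nontrivial connected subgroup $H$ of ${\rm SU}(2,1)$ has a canonical invariant object which is a point of $\ch{2}\cup\partial\ch{2}$ or a proper totally geodesic submanifold of $\ch{2}$}. Granting this, the assumption that $\Gamma$ fixes no such object forces $H$ not to be proper, so $H={\rm SU}(2,1)$; as ${\rm SU}(2,1)$ is connected we get $\overline{\Gamma}={\rm SU}(2,1)$, i.e.\ $\Gamma$ is dense.

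To prove the reduced statement, write $\mathfrak{h}=\mathfrak{r}\rtimes\mathfrak{s}$ with $\mathfrak{r}$ the radical and $\mathfrak{s}$ a Levi factor, and split into cases according to the noncompact part $\mathfrak{s}_{nc}$ of $\mathfrak{s}$. If $\mathfrak{s}_{nc}=\mathfrak{su}(2,1)$ then $\mathfrak{h}=\mathfrak{su}(2,1)$, $H$ is not proper, and we are in the dense case. If $\mathfrak{s}_{nc}=0$ then $H$ is compact-by-solvable, hence amenable; when $\mathfrak{r}=0$, $H$ is compact and fixes a point of $\ch{2}$, and when $\mathfrak{r}\ne 0$ the connected solvable subgroup $R=\exp\mathfrak{r}$ is contained in a conjugate of the minimal parabolic of ${\rm SU}(2,1)$, so $\mathrm{Fix}(R)$ is a nonempty proper totally geodesic subset of $\ch{2}\cup\partial\ch{2}$; as $R$ is characteristic in $H$, this set is canonical, and from it one reads off a point of $\partial\ch{2}$, a fixed geodesic, or a fixed complex geodesic or totally real $2$-plane. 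If $\mathfrak{s}_{nc}\ne 0$ and $\mathfrak{s}_{nc}\ne\mathfrak{su}(2,1)$ then $\mathfrak{s}_{nc}\cong\mathfrak{sl}(2,\R)$; when $\mathfrak{r}\ne 0$ one uses $\mathrm{Fix}(R)$ as before, and when $\mathfrak{r}=0$ the group $H$ is locally isomorphic to ${\rm SL}(2,\R)$, hence conjugate into ${\rm SU}(1,1)$ or ${\rm SO}(2,1)$, and its totally geodesic orbit --- a complex geodesic $\ch{1}$ or a totally real $2$-plane $\rh{2}$ --- is the unique $H$-invariant proper totally geodesic submanifold, so again canonical. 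In every case the object produced is invariant under the normalizer of $H$, and therefore under $\Gamma$.

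The step I expect to be the main obstacle is this canonicity, which is exactly what lets the invariant object descend from $H$ to the potentially much larger group $\Gamma$ that only normalizes $H$. For the amenable subcase one must check that $\mathrm{Fix}(\mathrm{rad}(H))$ always has one of the admissible shapes --- a single boundary point for a parabolic or unipotent radical, a geodesic for a loxodromic radical, a complex geodesic or a totally real plane when the radical contains an elliptic part --- which rests on the classification of one-parameter subgroups of ${\rm SU}(2,1)$ and on the Heisenberg geometry of $\partial\ch{2}$. For the semisimple ${\rm SL}(2,\R)$-subcase one needs the uniqueness of the invariant complex geodesic, resp.\ totally real $2$-plane, i.e.\ that such an $H$ leaves no other proper totally geodesic submanifold invariant. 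Since this proposition is the classical dichotomy of Chen and Greenberg, in the paper we simply invoke \cite{CG}; the outline above indicates the route a self-contained proof would take.
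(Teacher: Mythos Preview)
The paper does not prove this proposition at all: it is stated as a fundamental result due to Chen and Greenberg and is simply cited from \cite{CG}. You have correctly anticipated this in your final paragraph.

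Your outline is the standard route to such a discrete/dense dichotomy in rank one, and it is essentially the argument of \cite{CG}: pass to the identity component $H$ of the closure, observe that $H$ is normalized by $\Gamma$, and classify the proper connected subgroups of ${\rm SU}(2,1)$ to extract from each a canonical geometric invariant that must then be preserved by the normalizer. Two points in your sketch would need tightening in a self-contained write-up. First, in the amenable case with $\mathfrak{r}\neq 0$, the set $\mathrm{Fix}(R)$ is certainly $\Gamma$-invariant, but you must argue that it contains a \emph{single} canonical object of the required type (point, geodesic, complex line, or real plane) rather than, say, a one-parameter family of fixed points; this uses the explicit list of connected solvable subgroups of ${\rm SU}(2,1)$. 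Second, in the $\mathfrak{s}_{nc}\cong\mathfrak{sl}(2,\R)$ case with $\mathfrak{r}=0$, you also need to rule out that $H$ is only locally, not globally, conjugate into ${\rm SU}(1,1)$ or ${\rm SO}(2,1)$, and to allow for possible compact factors in $\mathfrak{s}$; the cleanest way is to look at the $H$-orbit of a point and invoke the classification of totally geodesic submanifolds. None of this is a genuine gap in your plan, only the expected work hidden behind the word ``canonical''; since the paper is content to quote \cite{CG}, your proposal already goes further than what is required here.
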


\vspace{2mm}

By applying this result and Lemma 2.5, we prove the following
proposition.

\begin{prop} Let $\Gamma$ be an irreducible subgroup of ${\rm
SU}(2,1)$. Then $\Gamma$ contains a loxodromic element.
\end{prop}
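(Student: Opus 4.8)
The goal is to show any irreducible $\Gamma \subset {\rm SU}(2,1)$ contains a loxodromic element. The strategy is to rule out all the ways this could fail and then invoke Proposition 2.2 (Chen--Greenberg). First I would observe that an irreducible subgroup of ${\rm SU}(2,1)$ is automatically non-elementary: if $\Gamma$ had a finite orbit on $\ch{2}\cup\partial\ch{2}$, then averaging would produce either a fixed point in $\ch{2}$ (hence an invariant negative line in $V$, contradicting irreducibility) or an invariant finite set on $\partial\ch{2}$, and a short case analysis on the size of that set — using that elements of ${\rm SU}(2,1)$ have at most isolated boundary fixed points unless they fix a chain — forces an invariant complex line or point in $\P(V)$, again contradicting irreducibility. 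So $\Gamma$ is non-elementary.

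**Applying Chen--Greenberg.** Now suppose, for contradiction, that $\Gamma$ contains no loxodromic element. Then every element of $\Gamma$ is elliptic or parabolic. By Lemma 2.5, $\Gamma$ cannot contain any parabolic element either (a parabolic element in a non-elementary group yields a loxodromic one), so every element of $\Gamma$ is elliptic. I would then apply Proposition 2.2: since $\Gamma$ is irreducible, it has no invariant point in $\ch{2}\cup\partial\ch{2}$ and no invariant complex geodesic; the remaining possibility to exclude is an invariant totally real geodesic $2$-plane. If $\Gamma$ leaves such a plane invariant, it is conjugate into ${\rm SO}(2,1)$, and ${\rm SO}(2,1)$ acting on $\rh{2}$ always contains hyperbolic (loxodromic) elements once the group is non-elementary — contradiction. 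Hence no invariant totally geodesic submanifold exists, and Proposition 2.2 gives that $\Gamma$ is either discrete or dense in ${\rm SU}(2,1)$.

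**Closing the argument.** If $\Gamma$ is dense in ${\rm SU}(2,1)$, then since the set of loxodromic elements is a nonempty open subset of ${\rm SU}(2,1)$ (loxodromy is characterized by $|{\rm tr}|>3$ away from the boundary curve of Goldman's discriminant, an open condition), density forces $\Gamma$ to contain a loxodromic element — contradiction. So $\Gamma$ is discrete. But a discrete group all of whose elements are elliptic is a torsion group; by a Burnside-type argument (a finitely generated linear torsion group is finite, and in general a discrete subgroup of a Lie group consisting of elliptic elements has a common fixed point in the symmetric space — this is the Cartan fixed point theorem applied to the closure, which is compact since it is discrete and all elements are elliptic hence lie in a compact set, forcing finiteness), $\Gamma$ is finite, hence elementary — contradicting non-elementarity. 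Therefore $\Gamma$ must contain a loxodromic element.

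**Main obstacle.** The delicate point is the very last step: ruling out an infinite discrete group of elliptic elements. The cleanest route is: a discrete subgroup of ${\rm SU}(2,1)$ in which every element is elliptic has bounded image (each elliptic element is conjugate into the maximal compact ${\rm S}({\rm U}(2)\times{\rm U}(1))$, but conjugates vary, so one must argue via the Cartan fixed point theorem — the group acts on $\ch{2}$ with every element fixing a point, and a standard argument shows a group generated by elliptics with no common fixed point contains non-elliptic elements, or else is relatively compact hence finite by discreteness). I would present this as: by discreteness and Proposition 2.2 we are reduced to $\Gamma$ finite or $\Gamma$ containing a loxodromic; a finite $\Gamma$ fixes the barycenter of any orbit in $\ch{2}$, giving an invariant negative line, contradicting irreducibility. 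This is the step where care is needed, but it is standard and the rest of the proof is formal bookkeeping against irreducibility.
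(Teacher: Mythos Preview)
Your argument uses the same ingredients as the paper --- Chen--Greenberg (Proposition 2.2), Lemma 2.5, openness of the loxodromic set, finiteness of a discrete purely elliptic group, and the ${\rm SO}(2,1)$ reduction for the $\R$-subgroup case --- with only an organizational difference: you front-load a direct proof that irreducible implies non-elementary and then argue by contradiction, whereas the paper splits off the $\R$-subgroup case first and, in the discrete branch, invokes irreducibility directly to produce an element not fixing the parabolic's fixed point (which is all Lemma 2.5's proof actually uses), deferring non-elementarity to Corollary 2.2 proved \emph{after} and \emph{using} this proposition. The one soft spot in your write-up is the ``short case analysis'' for a finite boundary orbit of size $\geq 3$: three isotropic points in general position in $\P(V)$ do not immediately hand you an invariant line, and the argument needs a genuine step (e.g.\ pass to the finite-index pointwise stabilizer, show it is diagonal hence central, conclude $\Gamma$ is finite); either fill this in or restructure as the paper does and avoid non-elementarity altogether.
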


\begin{proof} Let us consider the natural action of $\Gamma$ on the
projective space $\P(V)$ (or equally the action of its
projectivization). We know that the only proper totally geodesic
submanifolds of $\ch{2}$ are either geodesics, or complex geodesics,
or totally real geodesic 2-planes \cite{CG, Gol}. Note that if there
is a geodesic in $\ch{2}$ which is invariant under $\Gamma$, then
there is a complex geodesic $c$ in $\ch{2}$ which is also invariant
under $\Gamma$ (this complex geodesic $c$ is spanned by the geodesic
in question). Therefore, the polar point to $c$ is invariant under
$\Gamma$ for its action on the projective space $\P(V)$. Hence, if
$\Gamma$ is irreducible, then either $\Gamma$ has no invariant
proper totally geodesic submanifolds or $\Gamma$ is an $\R$-subgroup
of ${\rm SU}(2,1)$.

First, we consider the case when $\Gamma$ has no invariant totally
real geodesic 2-planes. By applying Proposition 2.2, we get that
$\Gamma$ is either discrete or dense in ${\rm SU}(2,1)$. If $\Gamma$
is dense in ${\rm SU}(2,1)$, it contains a loxodromic element since
the set of loxodromic elements is open in ${\rm SU}(2,1)$
\cite{Gol}, and we are done. Now let us assume that $\Gamma$ is a
discrete subgroup of ${\rm SU}(2,1)$. Suppose that $\Gamma$ has no
loxodromic elements. If $\Gamma$ contains only elliptic elements,
then it is easy to see that $\Gamma$ is finite, and then, using the
arguments similar to those in \cite{Rat}, we conclude that there is
a point in $\ch{2}$ which is invariant under $\Gamma$, a
contradiction. Therefore, $\Gamma$ contains a parabolic element
$\gamma$. Since $\Gamma$ is irreducible, it contains an element not
fixing the unique fixed point of $\gamma$. By applying Lemma 2.5, we
get that $\Gamma$ contains a loxodromic element, a contradiction.

Now, let $\Gamma$ be an $\R$-subgroup. Then $\Gamma$ is conjugate in
${\rm SU}(2,1)$ to a subgroup of  ${\rm SO}(2,1)$. Since $\Gamma$
irreducible, the result follows from the plane real hyperbolic
geometry \cite{B, Rat}.
\end{proof}

\vspace{3mm}

\begin{co} Let  $G$ be an irreducible subgroup of  ${\rm
PU}(2,1)$. Then $G$ is non-elementary.
\end{co}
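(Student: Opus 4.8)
The plan is to deduce this from Proposition~2.3 together with the standard dynamics of a loxodromic element. First I would pass to the preimage $\Gamma := \pi^{-1}(G) \subset \su{2}$ and note that it is irreducible: since $\pi$ is surjective, an element $A \in \Gamma$ fixes a point $[v] \in \P(V)$ if and only if $\pi(A)$ does, so $\Gamma$ has a common fixed point in $\P(V)$ exactly when $G$ does; as $G$ is irreducible, neither does, i.e. $\Gamma$ has no invariant $1$-dimensional $\C$-subspace. By Proposition~2.3, $\Gamma$, and hence $G$, contains a loxodromic element $g$; let $p^{+}, p^{-}$ be its two (distinct) fixed points on $\partial{\ch{2}}$.

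Next I would record the only fact about $g$ that is needed: $g$ acts on $\ch{2} \cup \partial{\ch{2}}$ with north--south dynamics, so that $g^{n}x \to p^{+}$ as $n \to +\infty$ for every $x \neq p^{-}$ (see \cite{Gol}). It follows that the only points of $\ch{2} \cup \partial{\ch{2}}$ with a finite $\langle g\rangle$-orbit are $p^{+}$ and $p^{-}$: if $x \notin \{p^{+}, p^{-}\}$ then $g^{n}x \to p^{+} \neq x$, so $g^{n}x \neq x$ for all large $n$ and $x$ is not periodic. In particular no point of $\ch{2}$ can be periodic for $g$, since $p^{\pm} \in \partial{\ch{2}}$.

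Now suppose, for contradiction, that $G$ is elementary, so $Gx$ is finite for some $x \in \ch{2} \cup \partial{\ch{2}}$. Then $\langle g\rangle x \subseteq Gx$ is finite, so by the previous paragraph $x \in \{p^{+}, p^{-}\}$, and hence $Gx \subseteq \{p^{+}, p^{-}\}$. If $Gx$ is a single point, then $G$ fixes a point of $\P(V)$, contradicting the irreducibility of $G$. If $Gx = \{p^{+}, p^{-}\}$, then $G$ preserves this pair of boundary points; since there is a unique complex geodesic $c$ whose boundary chain contains both $p^{+}$ and $p^{-}$, every element of $G$ must carry $c$ to itself, hence preserves its Hermitian orthogonal complement and so fixes the polar point of $c$ in $\P(V)$, again contradicting irreducibility. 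Therefore $G$ is non-elementary.

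Essentially everything here is immediate once Proposition~2.3 is available; the one step requiring a little care is the last case, where I must check that a group preserving a two-point subset of $\partial{\ch{2}}$ genuinely fixes a point of $\P(V)$, via the invariance of the complex geodesic spanned by those two points and of its polar point. I do not anticipate any serious obstacle beyond this bookkeeping.
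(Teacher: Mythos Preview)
Your proof is correct and follows essentially the same approach as the paper's: invoke Proposition~2.3 to obtain a loxodromic element, use its dynamics to force any finite orbit into its pair of boundary fixed points, and then derive a contradiction via the invariant complex geodesic and its polar point. The only minor difference is that the paper treats the case $p\in\ch{2}$ by appealing to the barycenter/fixed-point argument from the proof of Proposition~2.3, whereas you dispose of it uniformly via the north--south dynamics of $g$; your version is slightly cleaner but the substance is the same.
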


\begin{proof} Assume that  $G$ is elementary. Let $p \in \ch{2} \cup \partial {\ch{2}}$
be a point such that the orbit $G(p)$ of $p$ is finite. If $p \in
\ch{2}$, then it follows from the proof of Proposition 2.3 that
there is a point in $\ch{2}$ which is invariant under $\Gamma$, a
contradiction. Next, let us suppose that $p \in \partial {\ch{2}}$.
By applying Proposition 2.3, we have that $G$ contains a loxodromic
element. Since $G(p)$ is finite, this implies that $G(p)$ consists
of two distinct points. Let $G(p)=\{p_1, p_2\}$ and $c$ be the
unique complex geodesic spanned by $\{p_1, p_2\}$. Then $c$ is
invariant under $G$, and, therefore, $G$ leaves invariant its polar
point, a contradiction.
\end{proof}

\vspace{3mm}

Thus, we proved that any irreducible subgroup of ${\rm SU}(2,1)$
contains a loxodromic element. With this, we now prove our main
result: if $\Gamma$ is an irreducible subgroup of ${\rm SU}(2,1)$,
then $\Gamma$  is conjugate in ${\rm SU}(2,1)$ to a subgroup of
${\rm SU}(2,1,\Q(\Gamma, \lambda)),$ where $\lambda$ is the absolute
value of an eigenvalue of any loxodromic element of $\Gamma$ with
$|\lambda| \neq 1$. In particular, this implies that $\Gamma$ can be
defined over the field $\Q(\Gamma, \lambda)$.

\begin{theorem} Let $\Gamma$ be an irreducible subgroup of
 ${\rm SU}(2,1)$. Let $A \in \Gamma$ be any loxodromic element with eigenvalues
$\lambda_1 = \lambda e^{i\varphi},$  $\lambda_2 = e^{-2i\varphi}$,
$\lambda_3 = \lambda^{-1}e^{i\varphi}$. Then $\Gamma$ is conjugate
in ${\rm SU}(2,1)$ to a subgroup of ${\rm SU}(2,1,\Q(\Gamma,
\lambda)).$
\end{theorem}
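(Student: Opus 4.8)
The plan is to reduce everything to Proposition 2.1. First I would find, inside $\Gamma$, an element $B$ for which $\Gamma'=\langle A,B\rangle$ is irreducible; then Proposition 2.1 puts a suitable conjugate of $\Gamma'$ into ${\rm SU}(2,1,\Q(\Gamma,\lambda))$; finally a Burnside-trace argument forces the same conjugate of the whole group $\Gamma$ into ${\rm SU}(2,1,\Q(\Gamma,\lambda))$.

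\emph{Step 1: an irreducible two-generator subgroup exists.} Conjugating, I may assume $A={\rm diag}(\lambda_1,\lambda_2,\lambda_3)$; its eigenvalues have moduli $\lambda,1,\lambda^{-1}$, which are pairwise distinct since $\lambda>0$, $\lambda\neq 1$, so $A$ is regular and its only proper invariant subspaces of $V$ are the three eigenlines $L_1,L_2,L_3$ and the three coordinate planes. A direct check with the chosen basis (in which $e_1,e_3$ are isotropic) shows that each coordinate plane is the Hermitian orthogonal complement of one of the $L_i$; since elements of ${\rm SU}(2,1)$ are unitary, a subgroup containing $A$ is therefore reducible if and only if it preserves one of $L_1,L_2,L_3$. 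If no $B$ worked, $\Gamma$ would be the union of the three proper subgroups $\Gamma_i={\rm Stab}_\Gamma(L_i)$; this union cannot collapse to two subgroups (no group is a union of two proper subgroups), so by the classical theorem (Scorza) that an irredundant cover of a group by three proper subgroups comes from a surjection onto the Klein four-group whose kernel is the intersection of the three, we would get $\Gamma/\Gamma_0\cong(\Z/2)^2$ with $\Gamma_0=\Gamma_1\cap\Gamma_2\cap\Gamma_3$ normal in $\Gamma$. But $\Gamma_0$ consists of diagonal matrices and contains the regular element $A$, so its common eigenlines are exactly $\{L_1,L_2,L_3\}$; conjugation by elements of $\Gamma$ then permutes this set, giving a homomorphism $\Gamma\to{\rm Sym}\{L_1,L_2,L_3\}\cong S_3$ with kernel exactly $\Gamma_0$. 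Hence $\Gamma/\Gamma_0$ embeds in $S_3$, which is impossible for $(\Z/2)^2$. So some $B\in\Gamma$ gives an irreducible $\Gamma'=\langle A,B\rangle$.

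\emph{Step 2: apply Proposition 2.1.} Since $\Gamma'$ is irreducible and contains the loxodromic element $A$ with the required eigenvalues, Proposition 2.1 gives $g\in{\rm SU}(2,1)$ with $g\Gamma'g^{-1}\subseteq{\rm SU}(2,1,\Q(\Gamma',\lambda))\subseteq{\rm SU}(2,1,\Q(\Gamma,\lambda))$. Replacing $\Gamma$ by $g\Gamma g^{-1}$ (which changes neither the trace field nor $\lambda$) and writing $K=\Q(\Gamma,\lambda)$, I may now assume that $\Gamma'=\langle A,B\rangle$ is irreducible and contained in ${\rm SU}(2,1,K)$.

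\emph{Step 3: propagate to $\Gamma$.} The natural representation of $\Gamma'$ on $V$ is irreducible, so by Burnside's theorem the $\C$-span of $\Gamma'$ in ${\rm M}(3,\C)$ is all of ${\rm M}(3,\C)$; pick $g_1,\dots,g_9\in\Gamma'$ that are linearly independent over $\C$. As $g_1,\dots,g_9\in{\rm M}(3,K)$ they are also linearly independent over $K$, and since ${\rm M}(3,K)$ is $9$-dimensional over $K$ they form a $K$-basis of it. For any $C\in\Gamma$ we have $g_sC\in\Gamma$, so ${\rm tr}(g_sC)\in\Q(\Gamma)\subseteq K$ for each $s$. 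As the $g_s$ span ${\rm M}(3,K)$ and the pairing $(M,N)\mapsto{\rm tr}(MN)$ is non-degenerate, the $K$-linear map $C\mapsto({\rm tr}(g_sC))_{s=1}^9$ is invertible over $K$; therefore every entry of $C$ lies in $K$. Thus $\Gamma\subseteq{\rm SU}(2,1,\Q(\Gamma,\lambda))$, as claimed.

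I expect the only real difficulty to be Step 1 — excluding the degenerate possibility that every $\langle A,B\rangle$ is reducible. The combination of Scorza's three-subgroup theorem with the normality of the diagonal part $\Gamma_0$ is what rules it out, and one must take care that the cover of $\Gamma$ by the stabilizers $\Gamma_i$ is genuinely irredundant. Steps 2 and 3 are then routine, and Proposition 2.3 ensures the standing hypothesis (that $\Gamma$ has a loxodromic element) is not vacuous.
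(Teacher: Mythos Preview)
Your proof is correct, and Steps~2 and~3 (conjugate the two-generator irreducible subgroup via Proposition~2.1, then propagate to all of $\Gamma$ by Burnside plus the non-degeneracy of the trace form) are exactly what the paper does. The genuine difference is in Step~1. The paper does not look for an arbitrary $B$ making $\langle A,B\rangle$ irreducible; instead it produces two \emph{conjugates} $A_1,A_2$ of $A$ inside $\Gamma$ with no common fixed point in $\P(V)$, by a direct geometric case analysis on the isotropic and positive fixed points of $A$ and of $BAB^{-1}$, $DAD^{-1}$ for suitable $B,D\in\Gamma$. Your route is more purely group-theoretic: you observe that reducibility of every $\langle A,B\rangle$ forces $\Gamma=\Gamma_1\cup\Gamma_2\cup\Gamma_3$ with $\Gamma_i={\rm Stab}_\Gamma(L_i)$, invoke Scorza's theorem on covers by three proper subgroups to get $\Gamma/\Gamma_0\cong(\Z/2)^2$, and then derive a contradiction by embedding $\Gamma/\Gamma_0$ in $S_3$ via the permutation action on the eigenlines of the normal diagonal subgroup $\Gamma_0$. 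Your argument is clean and avoids the fixed-point casework; the paper's argument is more elementary in that it does not appeal to Scorza, and it has the minor bonus that both generators of the irreducible subgroup are loxodromic with the same $\lambda$ (not actually needed for Proposition~2.1). Either way the rest of the proof goes through identically.
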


\begin{proof} First, we show that $\Gamma$
contains two loxodromic elements $A_1$ and $A_2$ having the same
eigenvalues as $A$ (in fact, we show that $A_1$ and $A_2$ are
conjugate to $A$ in $\Gamma$) such that the subgroup $\Gamma_0 =
\langle A_1, A_2 \rangle$ generated by $A_1$ and $A_2$ is
irreducible. In what follows, we consider the natural action of
$\Gamma$ on the projective space $\P(V)$. We remark that $A$ has
three fixed points $p_1 , p_2 , p_3$ for its action on the
projective space: $p_1 , p_2$ are isotropic and $p_3$ is positive,
$p_3$ is the polar point to the complex projective line $\alpha$
spanned by $p_1$ and $p_2$. If all the elements of $\Gamma$ fix the
point $p_3$, then $\Gamma$ is reducible. So, there exists an element
$B$ of $\Gamma$ which does not fix $p_3$. Let $C=BAB^{-1}$ and
$q_3=B(p_3)$. Then $C(q_3)=q_3$. Note that $C$ is loxodromic, hence
$C$ does not fix $p_3$. Let $q_1$ and $q_2$ be the isotopic fixed
point of $C$. If the sets $\{p_1, p_2\}$ and $\{q_1, q_2\}$ are
disjoint, taking $A_1=A$ and $A_2=C$ we are done, since in this case
the elements $A$ and $C$ have no common fixed points for their
action on $\P(V)$. So, let us consider the case when these sets have
non-empty intersection, that is, when the elements $A$ and $C$ have
a common isotropic fixed point. If $\{p_1, p_2\}= \{q_1, q_2\}$,
then the complex projective line $\alpha$ is invariant under $C$,
and, therefore, $C(p_3)=p_3$, a contradiction. This implies that $A$
and $C$ may have only one common isotropic fixed point. One may
assume without loss of generality that $p_1=q_1$ is a unique common
isotropic fixed point of $A$ and $C$. Then since $\Gamma$ is
irreducible, there exists an element $D \in \Gamma$ which does not
fix the point $p_1$. Let $E=DAD^{-1}$. We have that $E$ is
loxodromic and $E(p_1) \neq p_1$. If the invariant complex geodesic
$\beta$ of $E$ is not equal to $\alpha$, then taking $A_1=A$ and
$A_2=E$, we are done. If $\alpha =\beta$, we take $A_1=C$ and
$A_2=E$.

\vspace{2mm}

Now, by applying Proposition 2.1, we get that there exists $f \in
{\rm SU}(2,1)$ such that $\Gamma_{0}^{*} = f \Gamma_0 f^{-1}$ is a
subgroup of ${\rm SU}(2,1, \Q(\Gamma_{0}, \lambda)).$ Let
$\Gamma^{*} = f \Gamma f^{-1}.$ Then $\Gamma_{0} ^{*}$ is a subgroup
of $\Gamma^{*}$. In order to show that $\Gamma^{*}$ is a subgroup of
${\rm SU}(2,1,\Q(\Gamma,\lambda)),$ we apply Burnside's density
theorem, stated as Theorem 16'' in I.Kaplansky \cite{Kap}, and the
trick which one can find in the proof of Theorem B in I.Kaplansky
\cite{Kap}. According to I.Kaplansky this trick is due to C.Procesi.

\vspace{2mm}

Since $\Gamma_{0}^{*}$ is irreducible, by applying Burnside's
density theorem, we get that $\Gamma_{0}^{*}$ contains a basis of
${\rm M}(3,\C)$ over $\C$. Let $S=\{S_1, S_2, \ldots , S_9\}$ be
such a basis. Then it follows that for any element $\gamma \in
\Gamma^{*}$ there exist complex numbers $c_1, c_2, \ldots, c_9$ such
that
$$\gamma = c_1 S_1 + c_2  S_2 + \ldots +c_9 S_9.$$

Let ${\rm Tr}(A,B)=(A,B)$ denote the trace form on ${\rm M}(3,\C)$
so that

$$(A,B)={\rm Tr}(A,B) = {\rm  tr}(AB).$$

It is well-known that ${\rm Tr}$ is a non-degenerate symmetric
bilinear form.

\vspace{2mm}

It follows from the equality

$$\gamma = c_1 S_1 + c_2  S_2 + \ldots +c_9 S_9$$
that

$$(\gamma,S_i) = c_1 (S_1,S_i) + c_2  (S_2,S_i) + \ldots +c_9 (S_9,S_i)$$
for all $i=1, 2, \ldots, 9.$

We consider these equalities as a system of linear equations in
unknowns $c_1, c_2, \ldots, c_9$. We have that for all $i,j=1, 2,
\ldots, 9,$ the product $(S_i,S_j)$ is in $\Q(\Gamma_{0}, \lambda)$,
and the product $(\gamma,S_i)$ is in $\Q(\Gamma, \lambda)$.
Therefore, this implies that every coefficient of the system lies in
$\Q(\Gamma,\lambda).$ Since the form ${\rm Tr}$ is non-degenerate,
this system is non-singular. Solving this system by Cramer's rule,
we conclude that every $c_i$ lies in $\Q(\Gamma,\lambda).$ This
implies that every $\gamma \in \Gamma^{*}$ lies in ${\rm
SU}(2,1,\Q(\Gamma,\lambda)).$  From this, we conclude that
$\Gamma^{*}$ is a subgroup of ${\rm SU}(2,1,\Q(\Gamma,\lambda)).$
\end{proof}

\vspace{2mm}

As a corollary of this theorem, we get the  following.

\begin{theorem}
Let $\Gamma$ be an irreducible subgroup of ${\rm SU}(2,1)$ such that
$\Q(\Gamma)$ is a subset of $\R$, then $\Gamma$ is conjugate in
${\rm SU}(2,1)$ to a subgroup of ${\rm SO}(2,1)$.
\end{theorem}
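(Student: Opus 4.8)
The plan is to deduce Theorem 2.2 directly from Theorem 2.1. Since $\Gamma$ is irreducible, Proposition 2.3 guarantees that $\Gamma$ contains a loxodromic element $A$ with eigenvalues $\lambda_1 = \lambda e^{i\varphi}$, $\lambda_2 = e^{-2i\varphi}$, $\lambda_3 = \lambda^{-1} e^{i\varphi}$, where $\lambda > 0$ and $\lambda \neq 1$. By Theorem 2.1, $\Gamma$ is conjugate in ${\rm SU}(2,1)$ to a subgroup of ${\rm SU}(2,1,\Q(\Gamma,\lambda))$. So after replacing $\Gamma$ by a conjugate, we may assume $\Gamma \subset {\rm SU}(2,1,\Q(\Gamma,\lambda))$, i.e.\ every matrix entry of every element of $\Gamma$ lies in $\Q(\Gamma,\lambda)$.

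First I would pin down the field $\Q(\Gamma,\lambda)$ under the hypothesis $\Q(\Gamma) \subset \R$. By Lemma 2.1 (applied to $\langle A\rangle \subset \Gamma$, whose trace field is contained in $\Q(\Gamma)$), we have $e^{i\varphi} \in \Q(\Gamma,\lambda)$; combined with $\Q(\Gamma) \subset \R$ this forces $e^{i\varphi}$ to be real, hence $\varphi \in \{0,\pi\}$ and $e^{i\varphi} = \pm 1$. Therefore $\Q(\Gamma,\lambda) = \Q(\Gamma)(\lambda)$ is a real field, and $A = \pm\,{\rm diag}(\lambda, 1, \lambda^{-1})$ has real entries. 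Thus after the conjugation from Theorem 2.1 we actually have $\Gamma \subset {\rm SU}(2,1,K)$ for a real field $K = \Q(\Gamma,\lambda) \subset \R$, so in particular $\Gamma \subset {\rm SL}(3,\R) \cap {\rm SU}(2,1)$.

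The remaining point is to identify the real matrices in ${\rm SU}(2,1)$ with (a conjugate of) ${\rm SO}(2,1)$. In the basis $e = \{e_1,e_2,e_3\}$ used throughout the paper, the Hermitian form has matrix $J = \begin{pmatrix} 0 & 0 & 1 \\ 0 & 1 & 0 \\ 1 & 0 & 0 \end{pmatrix}$, which has real entries. A matrix $M \in {\rm SU}(2,1)$ with real entries satisfies $M^{T} J M = J$ (the Hermitian condition $\overline{M}^{T} J M = J$ becomes the real orthogonality condition since $\overline{M} = M$), so $M$ preserves the real symmetric bilinear form of signature $(2,1)$ given by $J$ on $\R^3$. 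Hence the real points ${\rm SU}(2,1,\R)$ form exactly the subgroup preserving this real quadratic form, which is the isometry group of the real quadratic space of signature $(2,1)$; with $\det M = 1$ this is a copy of ${\rm SO}(2,1)$ (more precisely the one identified in Section 1 as the stabilizer in ${\rm SU}(2,1)$ of the canonical totally real subspace, the $\R$-span of $e_1,e_2,e_3$). So $\Gamma$, being contained in ${\rm SU}(2,1,K) \subset {\rm SU}(2,1,\R)$, lies in this copy of ${\rm SO}(2,1)$, and we are done.

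The only genuine subtlety is the bookkeeping with the overall unimodular scalar: the conjugacy form of a loxodromic element in Section 1 is normalized to have determinant $1$, so one must check that after the Theorem 2.1 conjugation $A$ really appears with real entries and that the factor $e^{i\varphi}$ extracted from Lemma 2.1 does not hide a nontrivial cube root of unity. Since Lemma 2.1 places $e^{i\varphi}$ itself (not merely $e^{3i\varphi}$) in $\Q(\Gamma,\lambda)$, the argument that $e^{i\varphi} = \pm 1$ is immediate, and no further case analysis is needed; the rest is the standard identification of real points with ${\rm SO}(2,1)$. I expect the write-up to be short, with the main (mild) obstacle being to state cleanly that ``real entries in ${\rm SU}(2,1)$'' coincides with the embedded ${\rm SO}(2,1)$ from Section 1.
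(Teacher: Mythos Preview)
Your proof is correct and follows essentially the same route as the paper: use Proposition 2.3 to obtain a loxodromic $A$, observe that $\Q(\Gamma,\lambda)\subset\R$ because $\Q(\Gamma)\subset\R$ and $\lambda\in\R_{>0}$, and apply Theorem 2.1. The detour through Lemma 2.1 to pin down $e^{i\varphi}=\pm 1$ is harmless but unnecessary for the argument (the paper simply notes the eigenvalues are real and moves on); your explicit identification of ${\rm SU}(2,1,\R)$ with ${\rm SO}(2,1)$ via the real Gram matrix $J$ spells out what the paper leaves implicit from Section 1.
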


\begin{proof} Since  $\Gamma$ is irreducible, it follows from Proposition
2.3 that $\Gamma$  contains a loxodromic element $A$. Let $\lambda_1
= \lambda e^{i\varphi},$ $\lambda_2 = e^{-2i\varphi}$, $\lambda_3 =
\lambda^{-1}e^{i\varphi}$ be its eigenvalues. It is easy to see that
in this case all the eigenvalues of $A$ are real: $A$ is either
hyperbolic or loxodromic whose elliptic part is of order 2.  Since
the field $\Q(\Gamma)$ is real, the field $\Q(\Gamma, \lambda)$ is
also real. So, the result follows from Theorem 2.1.
\end{proof}

\vspace{3mm}

We would like to stress that in Theorem 2.1 and Theorem 2.2 we do
not assume that the group $\Gamma$ is discrete.

\vspace{2mm}

Next we show that the conclusions of Theorem 2.1 and Theorem 2.2 are
not true if $\Gamma$ is reducible. Indeed, let us consider the
embedding of ${\rm SL}(2,\R)$ in ${\rm SU}(2,1)$ given by

$$
\left[
\begin{array}{cc}
a & b \\
c & d
\end{array}
\right] \longrightarrow \left[
\begin{array}{ccc}
a & 0 & - i b\\
0 & 1 & 0\\
i c & 0 & d
\end{array}
\right].
$$
This defines a faithful representation  of $\rm{SL}(2,\R)$ in  ${\rm
SU}(2,1)$. Let $\Gamma$ be the image of $\rm{SL}(2,\R)$ under this
embedding. Then $\Gamma$ is reducible  because the complex line
spanned by the vector $(0,1,0)^T$ is invariant under $\Gamma$. In
fact, $\Gamma$ is a $\C$-subgroup of ${\rm SU}(2,1)$. Also, we have
that the trace field of $\Gamma$ is real. It is easy to see that
$\Gamma$ cannot be conjugate in ${\rm SU}(2,1)$ to a subgroup of
${\rm SO}(2,1)$. Moreover, if we consider the image of
$\rm{SL}(2,\Z)$ under this embedding, we get an example of a
non-elementary discrete subgroup of ${\rm SU}(2,1)$ whose trace
field is $\Q$ but which is not conjugate in ${\rm SU}(2,1)$ to a
subgroup of ${\rm SO}(2,1)$.

\vspace{2mm}

Now we consider the case of subgroups of ${\rm PU}(2,1)$. We would
like to find conditions under which a subgroup $G$ of ${\rm
PU}(2,1)$  leaves invariant a totally real geodesic 2-plane in
$\ch{2}$. It is natural to ask if it is possible or not to get an
answer in terms of the traces of lifts of elements of $G$ to ${\rm
SU}(2,1)$.

\vspace{2mm}

Let $\pi : {\rm SU}(2,1) \rightarrow  {\rm PU}(2,1)$ be a natural
projection. Let $G$ be a subgroup of ${\rm PU}(2,1)$. Then the {\sl
trace field} of $G$, denoted $\Q(G)$, is the field $\Q(\Gamma)$,
where $\Gamma = \pi^{-1}(G).$ We remark that this field is never
real.

\vspace{2mm}

The following example is useful to understand the problem. Let
$\Gamma$ be a subgroup of ${\rm SO}(2,1)$ such that the restriction
$\pi : \Gamma \rightarrow {\rm PU}(2,1)$ is a monomorphism. Let $G=
\pi(\Gamma)$. We have that $\Gamma$ is a lift of $G$. One could
define an "invariant" trace field of $G$ as the trace field of
$\Gamma$. And, in this case, this field is real! So, it would be
natural to define an "invariant" trace field of a subgroup $G$ of
${\rm PU}(2,1)$ which has a lift to ${\rm SU}(2,1)$ as the trace
field of its lift. Unfortunately, the problem when a subgroup $G$ of
${\rm PU}(2,1)$ has a lift to ${\rm SU}(2,1)$ is still open, and it
seems very difficult. To our knowledge, the only results in this
direction are in  \cite{GKL}, in the case when $G$ is isomorphic to
the fundamental group of a closed orientable surface. Some examples
when $G$ has no lift, the reader can find in \cite{AGG}. The lifting
problem in the case of real hyperbolic geometry of dimension three
was completely solved in \cite{Cul}.

\vspace{2mm}

In \cite{R}, \cite{MaR}, \cite{V2} the invariant trace field was
defined for subgroups of $\rm{PSL}(2,\C),$ see also \cite{Mc}, for
the case of ${\rm SU}(n,1).$ We follow their ideas to define an
invariant trace field for subgroups of ${\rm PU}(2,1)$.

\vspace{2mm}

Let $G$ be a subgroup of ${\rm PU}(2,1)$ and $\Gamma = \pi^{-1}(G)$.
Then the  {\sl invariant trace field} of $G$, denoted by $k(G)$, is
defined to be the field $\Q(\Gamma^{3})$, where $\Gamma^{3}= \langle
\gamma^3: \gamma \in \Gamma \rangle$. It follows from \cite{Mc} that
the invariant trace field is an invariant  of the commensurability
class.

\vspace{2mm}

If $A \in {\rm SU}(2,1)$, then we have the following trace identity

$$ \rm{tr}(A^3)= (\rm{tr}(A))^3 -3\rm{tr}(A)\rm{tr}(A^{-1}) +3,$$
see, for instance, \cite{P}. Since an element of ${\rm PU}(2,1)$ has
three lifts to ${\rm SU}(2,1)$ which differ by a cube root of unity,
it follows from this formula that if $G$ is an $\R$-subgroup of
${\rm PU}(2,1)$, then the invariant trace field of $G$ is real.

\vspace{2mm}

By applying Theorem 2.2, we get the following characterization of
discrete  $\R$-subgroups of ${\rm PU}(2,1)$.

\begin{theorem}
Let $G$ be an irreducible discrete subgroup of  ${\rm PU}(2,1)$.
Then $G$ is an $\R$-subgroup if and only if the invariant trace
field $k(G)$ of $G$ is real.
\end{theorem}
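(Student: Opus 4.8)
The plan is to prove the two implications of Theorem 2.4 separately, using the trace identity for cubes and Theorem 2.2 as the main tools. For the easy direction, suppose $G$ is an $\R$-subgroup. Then $G$ leaves invariant a totally real geodesic 2-plane, so after conjugation $G$ is contained in the image of ${\rm SO}(2,1)$ in ${\rm PU}(2,1)$. Every element of $G$ then has a lift in ${\rm SO}(2,1) \subset {\rm SU}(2,1)$ with real trace; since the three lifts to ${\rm SU}(2,1)$ differ by cube roots of unity, the trace identity $\rm{tr}(A^3) = (\rm{tr}(A))^3 - 3\rm{tr}(A)\rm{tr}(A^{-1}) + 3$ shows that $\rm{tr}(\gamma^3)$ is independent of the choice of lift $\gamma$ of a given element of $G$, and equals a value computed from a real-traced representative, hence is real. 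Therefore $k(G) = \Q(\Gamma^3)$ is generated over $\Q$ by real numbers, so it is real. (This argument is essentially already spelled out in the excerpt just before the theorem statement.)

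For the converse, suppose $G$ is irreducible, discrete, and $k(G) = \Q(\Gamma^3)$ is real. Set $\Lambda = \Gamma^3 = \langle \gamma^3 : \gamma \in \Gamma\rangle \le {\rm SU}(2,1)$, a normal subgroup of $\Gamma$. I would first argue that $\Lambda$ is irreducible as a subgroup of ${\rm SU}(2,1)$. Indeed, $G$ irreducible means no common fixed point in $\P(V)$ for all of $G$; since $\Gamma^3$ is normal in $\Gamma$ and the index is finite (at most $3$ on the level of the quotient $\Gamma/\Gamma^3$-type obstruction — more precisely $G^3$ has finite index in $G$), a common invariant line for $\Lambda$ would be permuted by $\Gamma$ among finitely many lines, and irreducibility of $G$ together with the structure (a finite $\Gamma$-orbit of lines spans a proper invariant subspace, forcing reducibility of $\Gamma$ unless the orbit is all of $\P(V)$, impossible) rules this out. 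Being irreducible with real trace field, Theorem 2.2 applies to $\Lambda$: it is conjugate in ${\rm SU}(2,1)$ to a subgroup of ${\rm SO}(2,1)$, hence the projectivization $G^3 = \pi(\Lambda)$ is an $\R$-subgroup of ${\rm PU}(2,1)$, leaving invariant a totally real geodesic 2-plane $\mathcal R$.

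The final step is to promote invariance of $\mathcal R$ under $G^3$ to invariance under all of $G$. Since $G^3 \trianglelefteq G$ (it is generated by cubes, hence normal — conjugation sends cubes to cubes), for any $g \in G$ the plane $g(\mathcal R)$ is again a totally real geodesic 2-plane invariant under $g G^3 g^{-1} = G^3$. So it suffices to show $G^3$ leaves invariant a \emph{unique} totally real geodesic 2-plane; then $g(\mathcal R) = \mathcal R$ for all $g \in G$ and we conclude $G$ is an $\R$-subgroup, discrete because $G$ is, hence $\R$-Fuchsian. Uniqueness of $\mathcal R$ is where I expect the main work: a non-elementary subgroup of ${\rm SO}(2,1)$ (which $G^3$ is, being irreducible by the argument above, hence non-elementary by Corollary 2.2) acting on $\rh{2}$ has a well-defined limit set on $\partial \rh{2}$ consisting of more than two points, and any invariant totally real 2-plane must contain this limit set in its boundary $\R$-circle; since an $\R$-circle is determined by three of its points (or since two distinct totally real 2-planes meet in at most a geodesic, whose boundary is two points), the plane is unique. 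I would need to check the edge case where $G^3$ might be elementary or reducible — but $G$ irreducible forces $G^3$ irreducible as above, and Corollary 2.2 then gives non-elementary, so this case does not arise. The one genuinely delicate point to verify carefully is that $G^3$ is irreducible (equivalently non-trivial enough) when $G$ is: one must ensure $\Gamma^3$ does not collapse, which follows since an irreducible $\Gamma$ contains a loxodromic element $A$ by Proposition 2.3, and $A^3$ is loxodromic with the "same" axis, and the conjugates $g A^3 g^{-1}$ for the elements $g$ produced in the proof of Theorem 2.1 already generate an irreducible subgroup.
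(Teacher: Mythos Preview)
Your proof is correct and follows essentially the same route as the paper: apply Theorem~2.2 to $\Gamma^{3}$ to get an invariant totally real plane, then use normality of $\Gamma^{3}$ in $\Gamma$ together with a limit-set/uniqueness argument to promote invariance to all of $G$. The paper phrases the promotion step as ``the limit set of $H=\pi(\Gamma^3)$ equals the limit set of $G$ and lies in $\partial L$, hence $L$ is $G$-invariant,'' while you phrase it as ``$g(\mathcal R)$ is another $G^3$-invariant plane, and the plane is unique''; these are two formulations of the same underlying fact that an $\R$-circle is determined by any three of its points, and a non-elementary group has at least three limit points.

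One correction: your parenthetical claim that $G^{3}$ has finite index in $G$ (``at most $3$'') is false in general --- for a free group the subgroup generated by cubes has infinite index. Fortunately you do not actually use this, and your final argument for the irreducibility of $\Gamma^{3}$ is the right one: $\Gamma$ contains a loxodromic $A$ (Proposition~2.3), $A^{3}\in\Gamma^{3}$ is loxodromic with the same three eigenlines as $A$, and since $\Gamma^{3}$ is normal every $\Gamma$-conjugate of $A^{3}$ lies in $\Gamma^{3}$. If $\Gamma^{3}$ fixed a line $\ell$, then $\ell$ would be an eigenline of $A$, and its $\Gamma$-orbit would lie among the three eigenlines of $A$; since elements of ${\rm SU}(2,1)$ preserve the isotropic/positive type of lines, this forces a proper $\Gamma$-invariant subspace, contradicting irreducibility of $\Gamma$. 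The paper simply asserts ``it is clear that $\Gamma$ and $\Gamma^{3}$ are irreducible,'' so your extra justification here is welcome --- just drop the finite-index remark.
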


\begin{proof} Let $\Gamma = \pi^{-1}(G)$. Then it is clear that $\Gamma$ and
 $\Gamma^{3}$ are irreducible. Let $H=\pi(\Gamma^{3})$.
It follows from Theorem 2.2 that $\Gamma^{3}$ is conjugate in ${\rm
SU}(2,1)$ to a subgroup of ${\rm SO}(2,1)$. This implies that $H$
leaves invariant a totally real geodesic 2-plane $L$ in $\ch{2}$.
Since $\Gamma^{3}$ is a normal subgroup of $\Gamma$, it follows that
$H$ is a normal subgroup of $G$. Since the group $G$ is irreducible,
it follows from Corollary 2.2 that $G$ is non-elementary. Therefore,
the normality implies that the limit set of $H$ is equal to the
limit set of $G$, see, for instance, \cite{CG}. We remark that the
limit set of $H$ is contained in the boundary of $L$. Moreover,
using the fact that $G$ is non-elementary, we have that the set of
fixed points of loxodromic elements of $G$ is dense in its limit
set. This implies that $L$ is invariant with respect to $G$.
Therefore, $G$ is an $\R$-subgroup of ${\rm PU}(2,1)$.
\end{proof}

\vspace{2mm}

As an immediate corollary we get the following theorem.

\begin{theorem} Let $G$ be a discrete non-elementary subgroup of ${\rm
PU}(2,1)$ such that the invariant trace field $k(G)$ of $G$ is real.
Then $G$ is either $\R$-Fuchsian or  $\C$-Fuchsian.
\end{theorem}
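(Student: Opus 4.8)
The plan is to split into two cases according to whether $G$ is irreducible or reducible. If $G$ is irreducible, I would appeal directly to Theorem 2.3: since $G$ is an irreducible discrete subgroup of $\pu{2}$ and, by hypothesis, $k(G)\subset\R$, Theorem 2.3 tells us that $G$ leaves invariant a totally real geodesic $2$-plane in $\ch{2}$, i.e. $G$ is an $\R$-subgroup. As $G$ is discrete, this says exactly that $G$ is $\R$-Fuchsian, and this case is finished with essentially no further work.

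For the reducible case I would argue from the definition of reducibility: there is a point $p\in\P(V)$ fixed by every element of $G$. The point $p$ is negative, isotropic, or positive. If $p$ were negative or isotropic, then $p\in\ch{2}\cup\partial\ch{2}$ and its $G$-orbit would be the single point $\{p\}$, so $G$ would be elementary, contradicting the hypothesis. Hence $p$ is positive, and by the remarks in Section 1 it is the polar point of a unique complex geodesic $c=\P(p^{\perp})\cap\ch{2}$ (nonempty since $p^{\perp}$ has signature $(1,1)$). Every element of $G$ preserves the line $\C p$ and, being unitary, preserves its Hermitian orthogonal complement, hence preserves $c$; thus $G$ is a $\C$-subgroup, and being discrete it is $\C$-Fuchsian. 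Combining the two cases gives the theorem. (Note that in the reducible case the hypothesis on $k(G)$ is not even used.)

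There is no serious obstacle here: all the substantive content is carried by Theorem 2.3, and the theorem is genuinely a corollary, as the paper announces. The only step that requires a moment's attention is the reducible case, and even there the decisive observation — that a non-elementary subgroup cannot fix a negative or isotropic point, since such a fixed point would be a finite ($=$ one-point) orbit — is immediate from the definition of an elementary subgroup; this forces the common fixed point to be positive and hence forces $G$ to preserve the associated complex geodesic.
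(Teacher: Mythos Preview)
Your proof is correct and follows essentially the same route as the paper: split into the irreducible case (handled by Theorem 2.3) and the reducible case (the common fixed point cannot lie in $\ch{2}\cup\partial\ch{2}$ since $G$ is non-elementary, so it is positive and $G$ preserves the complex geodesic polar to it). Your observation that the reality hypothesis on $k(G)$ is not used in the reducible case is also in line with the paper's argument.
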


\begin{proof} If $G$ is irreducible, then it follows from Theorem
2.3 that $G$ is $\R$-Fuchsian. Now let us assume that $G$ is
reducible. Let $p$ be a common fixed point of all elements of $G$ in
their action on the projective space $\P(V)$. Since $G$ is
non-elementary, we have that $p$ cannot be in  $\ch{2} \cup
\partial {\ch{2}}$. So, $p$ is positive. This implies that $G$
leaves invariant the complex geodesic whose polar point is $p$.
Therefore, the group $G$ is $\C$-Fuchsian.
\end{proof}

\vspace{2mm}

We remark  that Theorem 2.4 can be considered as a complex
hyperbolic analog  of a classical result due to B.Maskit, see
Theorem G.18 in \cite{Mas} and Corollary 3.2.5 in \cite{MaR}.

\vspace{2mm}

We say that $g \in {\rm PU}(2,1)$ is a screw motion iff any lift of
$g$ to ${\rm SU}(2,1)$ has non-real trace. For instance, purely
hyperbolic and unipotent parabolic elements are not screw motions.
Geometrically, $g \in {\rm PU}(2,1)$ is not screw motion iff  $g$
has an invariant  totally real geodesic 2-plane.

\begin{co}
Let $G$ be an irreducible discrete subgroup of  ${\rm PU}(2,1)$.
Then $G$ is an $\R$-subgroup if and only if $G$ contains no screw
motions.
\end{co}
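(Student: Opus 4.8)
The plan is to derive this as a direct corollary of Theorem 2.3, by showing that the condition ``$G$ contains no screw motions'' is equivalent to ``the invariant trace field $k(G)$ of $G$ is real.'' One direction is essentially the trace identity already displayed in the excerpt: if $g \in G$ and $\gamma \in \pi^{-1}(g) \subset {\rm SU}(2,1)$ is any lift, then the three lifts of $g$ are $\gamma$, $\omega\gamma$, $\omega^2\gamma$ with $\omega^3 = 1$, so all three have the same cube, and ${\rm tr}(\gamma^3) = ({\rm tr}\,\gamma)^3 - 3\,{\rm tr}(\gamma)\,{\rm tr}(\gamma^{-1}) + 3$. Thus $\gamma^3 \in \Gamma^3$ and its trace is expressed in terms of ${\rm tr}(\gamma)$ and $\overline{{\rm tr}(\gamma)} = {\rm tr}(\gamma^{-1})$. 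If no element of $G$ is a screw motion, then every lift of every $g$ has real trace; in particular, for each generator $\gamma^3$ of $\Gamma^3$ of the form $(\text{lift of }g)^3$ the trace is real, and since $\Q(\Gamma^3)$ is generated by traces of words in such elements — and products of elements of $G$ are again elements of $G$, hence not screw motions — every generating trace is real, so $k(G) = \Q(\Gamma^3) \subset \R$.

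For the converse, suppose $k(G)$ is real. First I would invoke Theorem 2.3: since $G$ is irreducible and discrete, $k(G) \subset \R$ implies $G$ is an $\R$-subgroup, so after conjugation $G$ leaves invariant a totally real geodesic $2$-plane. But an element of ${\rm PU}(2,1)$ preserving a totally real geodesic $2$-plane $L$ lifts to an element of ${\rm SO}(2,1) \subset {\rm SU}(2,1)$ (the stabilizer of the canonical totally real subspace in ${\rm SU}(2,1)$ is ${\rm SO}(2,1)$, as recalled in Section 1), and such a lift is a real matrix, hence has real trace — so $g$ is not a screw motion. Therefore $G$ contains no screw motions. This completes the equivalence and hence the corollary.

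The main subtlety — really the only place one must be slightly careful — is making sure the characterization ``no screw motions'' matches up cleanly with ``$k(G)$ real'' rather than with ``$\Q(G)$ real'' (which, as the excerpt notes, is never the case). The point is that passing to cubes kills the ambiguity of the lift: being a screw motion is a property of $g \in {\rm PU}(2,1)$ precisely because all three lifts share a trace-cube relation, so ``$g$ is not a screw motion'' is equivalent to ``${\rm tr}(\gamma^3) = ({\rm tr}\,\gamma)^3 - 3|{\rm tr}\,\gamma|^2 + 3$ is real,'' i.e. to $\gamma^3$ having real trace. Thus the screw-motion-free condition for all of $G$ is exactly the statement that the traces of all elements of $\Gamma^3$ that arise as cubes of lifts are real; and since the invariant trace field is generated by traces of products, and a product of non-screw-motions in $G$ is again a non-screw-motion, these generate a real field. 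With that observation in hand the corollary is immediate from Theorem 2.3 and the remark preceding it.
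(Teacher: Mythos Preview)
The paper states this corollary without proof, so there is nothing to compare against; the question is whether your argument is complete. One direction is fine: if $G$ is an $\R$-subgroup then, after conjugation, every $g\in G$ has a lift in ${\rm SO}(2,1)$ (or $-A$ with $A\in{\rm O}(2,1)$), hence a real-trace lift, so no element is a screw motion. Also note a slip early on: ``every lift of every $g$ has real trace'' is false---only \emph{some} lift does---though your later discussion shows you understand this.

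The genuine gap is in the direction ``no screw motions $\Rightarrow k(G)\subset\R$.'' You correctly observe that the hypothesis is equivalent to ${\rm tr}(\gamma^{3})\in\R$ for every $\gamma\in\Gamma$, via the trace identity. But $\Gamma^{3}$ is the subgroup \emph{generated} by cubes; a typical $\delta\in\Gamma^{3}$ is a product $\gamma_{1}^{3}\cdots\gamma_{k}^{3}$ and need not itself be a cube. Your argument for ${\rm tr}(\delta)\in\R$ is: $\pi(\delta)\in G$ is not a screw motion, hence some lift has real trace. True---but the lifts of $\pi(\delta)$ are $\delta,\ \omega\delta,\ \omega^{2}\delta$, and you have not shown that $\delta$ itself is the real-trace one. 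The clause ``a product of non-screw-motions in $G$ is again a non-screw-motion'' only yields ${\rm tr}(\delta)\in\R\cup\omega\R\cup\omega^{2}\R$, not ${\rm tr}(\delta)\in\R$. Concretely, if it happened that $\omega I\in\Gamma^{3}$ you would have ${\rm tr}(\omega I)=3\omega\notin\R$ while $\pi(\omega I)={\rm id}$ is certainly not a screw motion; your reasoning does not exclude this kind of phenomenon.

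What would close the gap is to show that the real-trace lift $g\mapsto\tilde g$ is multiplicative, i.e.\ that ${\rm tr}(\tilde g\,\tilde h)\in\R$ whenever ${\rm tr}(\tilde g),{\rm tr}(\tilde h)\in\R$; equivalently, that $\{\gamma\in\Gamma:{\rm tr}(\gamma)\in\R\}$ is a subgroup. Granting that, the real-trace lifts generate an irreducible subgroup of ${\rm SU}(2,1)$ with real trace field, and Theorem~2.2 (not merely Theorem~2.3) gives the conclusion directly. But this multiplicativity is not automatic from the hypothesis, and your proposal does not supply it.
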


\begin{co} Let $G$ be an irreducible discrete subgroup of ${\rm PU}(2,1)$
 whose limit set is not contained in an $\R$-circle. Then its invariant trace
 field is non-real extension of $\Q$.
\end{co}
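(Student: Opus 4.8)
The plan is to deduce this directly from Theorem 2.3 by contraposition. Suppose, for contradiction, that the invariant trace field $k(G)$ is real, i.e.\ $k(G)\subset\R$. Since $G$ is irreducible and discrete, the hypotheses of Theorem 2.3 are met, so $G$ is an $\R$-subgroup of $\pu{2}$; that is, $G$ leaves invariant some totally real geodesic $2$-plane $L\subset\ch{2}$.

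The next step is to show that the limit set of $G$ is then forced to lie on the $\R$-circle $\partial L$. Because $G$ preserves $L$, the $G$-orbit of any point of $L$ stays inside $L$, so its accumulation points on $\partial\ch{2}$ lie in $\overline{L}\cap\partial\ch{2}=\partial L$. By Corollary 2.2 an irreducible subgroup of $\pu{2}$ is non-elementary, so the limit set $\Lambda(G)$ is a well-defined infinite set, and the preceding remark gives $\Lambda(G)\subset\partial L$. Since $\partial L$ is an $\R$-circle, the limit set of $G$ is contained in an $\R$-circle, contradicting the hypothesis. Hence $k(G)$ cannot be real.

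Finally, $k(G)=\Q(\Gamma^{3})$ is by construction a field generated over $\Q$, hence an extension of $\Q$; together with the previous paragraph it is a non-real extension of $\Q$, as claimed. I do not anticipate any real obstacle: the only point deserving an explicit word is the inclusion $\Lambda(G)\subset\partial L$, which follows from invariance of $L$ combined with non-elementarity (Corollary 2.2), and the observation that "irreducible" and "discrete" are precisely the hypotheses of Theorem 2.3.
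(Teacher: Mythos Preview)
Your argument is correct and follows exactly the route the paper intends: the paper does not spell out a proof of this corollary, but the proof of the next corollary (Corollary~2.5) uses precisely the same contrapositive step via Theorem~2.3, namely that a real invariant trace field forces $G$ to be an $\R$-subgroup and hence its limit set to lie in an $\R$-circle. Your extra care in invoking Corollary~2.2 to ensure non-elementarity and in justifying $\Lambda(G)\subset\partial L$ is appropriate and fills in what the paper leaves implicit.
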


\begin{co} Let $G$ be a discrete subgroup of ${\rm PU}(2,1)$ of
finite co-volume. Then its invariant trace field is non-real
extension of $\Q$.
\end{co}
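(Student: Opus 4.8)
The plan is to deduce this from Corollary 2.4: I would show that a finite co-volume discrete subgroup $G$ of ${\rm PU}(2,1)$ is necessarily irreducible with limit set equal to the whole sphere $\partial\ch{2}$, and then simply quote that corollary. The first and only substantive point is irreducibility. By definition $G$ is reducible precisely when all its elements share a fixed point $p\in\P(V)$, and $p$ is then negative, isotropic, or positive. If $p$ is negative, $G$ fixes a point of $\ch{2}$, hence lies in the stabilizer of that point, a maximal compact subgroup; being discrete, $G$ is finite, and ${\rm vol}(\ch{2}/G)={\rm vol}(\ch{2})/|G|=\infty$, a contradiction. If $p$ is isotropic, $G$ fixes a point $\xi\in\partial\ch{2}$ and therefore preserves the Busemann function at $\xi$ and every horoball centred at $\xi$; a fundamental domain then contains arbitrarily deep horoball directions and ${\rm vol}(\ch{2}/G)=\infty$, again impossible. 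If $p$ is positive, $G$ preserves the complex geodesic $c\subset\ch{2}$ polar to $p$, so $G$ is $\C$-Fuchsian; but a discrete group preserving a proper totally geodesic submanifold of real codimension two has infinite co-volume in $\ch{2}$, a contradiction. Since these exhaust the reducible cases, $G$ is irreducible (and, by Corollary 2.2, non-elementary, which one has in fact just re-proved).

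Next I would observe that a finite co-volume discrete subgroup of ${\rm PU}(2,1)$ has limit set the entire boundary sphere $\partial\ch{2}\cong S^{3}$ (a standard fact about lattices in rank-one groups). An $\R$-circle, being the boundary of a totally real geodesic $2$-plane, is a topological circle and hence a proper closed subset of $S^{3}$; therefore the limit set of $G$ is not contained in any $\R$-circle. Corollary 2.4 now applies to the irreducible discrete group $G$ and yields that the invariant trace field $k(G)$ is a non-real extension of $\Q$; it is an extension of $\Q$ by construction, and the content is its non-reality. Alternatively, one can bypass the limit-set remark: if $k(G)$ were real then, $G$ being discrete and non-elementary, Theorem 2.4 would force $G$ to be either $\R$-Fuchsian or $\C$-Fuchsian, and in either case $G$ would preserve a proper totally geodesic submanifold of $\ch{2}$ and hence have infinite co-volume, contradicting the hypothesis.

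The only part requiring genuine attention is the irreducibility step, specifically excluding the parabolic (isotropic fixed point) and $\C$-Fuchsian (positive fixed point) cases: these do not follow formally from the statements collected in the excerpt, but from the elementary geometric fact that a discrete group fixing a point of $\partial\ch{2}$, or preserving a proper totally geodesic submanifold of $\ch{2}$, cannot have finite co-volume. Granting that, the corollary is an immediate consequence of Corollary 2.4 (equivalently of Theorem 2.4).
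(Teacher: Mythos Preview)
Your proposal is correct and follows essentially the same route as the paper: establish irreducibility, then invoke the characterization of groups with real invariant trace field (the paper uses Theorem~2.3 directly, you use Corollary~2.4 or Theorem~2.4, which amounts to the same thing) to force $G$ to preserve a proper totally geodesic submanifold, contradicting finite co-volume. The only difference is that the paper dispatches irreducibility with the phrase ``it is clear that $G$ is irreducible'' and moves on, whereas you supply the case analysis (negative/isotropic/positive fixed point) that justifies this; your treatment is more complete on that point.
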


\begin{proof}
It is clear that $G$ is irreducible. Suppose that the invariant
trace field $k(G)$ of $G$ is real. By Theorem 2.3, $G$ is an
$\R$-subgroup of ${\rm PU}(2,1)$. Therefore, the limit set of $G$ is
contained in an $\R$-circle. This implies that $G$ cannot have
finite co-volume.
\end{proof}

\vspace{3mm}

\noindent  \textbf{Acknowledgements.} We are very grateful to John
Parker for helpful conversations. We are also thankful to the
referee for useful remarks and suggestions.


\end{document}